\newtheorem{theorem}{Theorem}[section]
\newtheorem{corollary}[theorem]{Corollary}
\newtheorem{definition}[theorem]{Definition}
\newtheorem{lemma}[theorem]{Lemma}
\newtheorem{proposition}[theorem]{Proposition}
\newtheorem{remark}[theorem]{Remark}
\newenvironment{proof}[1][Proof]{\noindent \emph{#1.} }{\hfill \ 
\rule{0.5em}{0.5em}}
\makeatletter\@addtoreset{equation}{section}\makeatother
\makeatletter\@addtoreset{figure}{section}\makeatother
\makeatletter\@addtoreset{table}{section}\makeatother
\begin{document}

\title{Tensor Numerical Approach to Linearized Hartree-Fock Equation 
for Lattice-type and Periodic Systems}

\author{V. KHOROMSKAIA, \thanks{Max-Planck-Institute for
        Mathematics in the Sciences, Inselstr.~22-26, D-04103 Leipzig,
        Germany ({\tt vekh@mis.mpg.de}).} \and
        B. N. KHOROMSKIJ \thanks{Max-Planck-Institute for
        Mathematics in the Sciences, Inselstr.~22-26, D-04103 Leipzig,
        Germany ({\tt bokh@mis.mpg.de}).}
        }
 
\date{}

\maketitle

\begin{abstract}  
This paper introduces and analyses the new grid-based tensor approach  
for approximate solution of the eigenvalue problem for linearized Hartree-Fock
equation applied to the 3D lattice-structured and periodic systems.
The set of localized basis functions over spatial $(L_1,L_2,L_3)$ lattice 
in a bounding box (or supercell) is assembled by multiple replicas of those from the unit cell.
All basis functions and operators are discretized on a global 3D tensor grid in the bounding box
which enables rather general basis sets. 
In the periodic case, the Galerkin Fock matrix is shown to have the three-level 
block circulant structure, that allows the FFT-based diagonalization. 
The proposed tensor techniques manifest the twofold benefits: 
(a) the entries of the Fock matrix are computed by  1D operations using
low-rank tensors represented on a 3D grid, (b) the low-rank tensor structure in 
the diagonal blocks of the Fock matrix in the Fourier space reduces the conventional 
3D FFT to the product of 1D FFTs. 
We describe fast numerical algorithms for the block circulant representation of the core 
Hamiltonian in the periodic setting based on low-rank tensor representation of arising
multidimensional functions. 
Lattice type systems in a box with open boundary conditions
are treated by our previous tensor solver
for single molecules, which makes possible calculations on large $(L_1,L_2,L_3)$ 
lattices due to reduced numerical cost for 3D problems.
The numerical simulations for  box/periodic $(L,1,1)$ 
lattice systems in a 3D rectangular ``tube'' with $L$ up to several hundred
confirm the theoretical complexity bounds for the tensor-structured  eigenvalue solvers
in the limit of large $L$.

\end{abstract}

\noindent\emph{AMS Subject Classification:}\textit{ } 65F30, 65F50, 65N35, 65F10

\noindent\emph{Key words:}  Hartree-Fock equation, tensor-structured numerical methods, 
3D grid-based tensor approximation,   Fock operator, core Hamiltonian, periodic systems,
lattice summation, block circulant matrix, Fourier transform.

\section{Introduction}\label{sec:introduct}

The efficient numerical simulation of periodic and perturbed periodic systems is one of the 
most challenging computational tasks in quantum chemistry calculations of 
crystalline, metallic and polymer-type compounds.
The reformulation of the nonlinear Hartree-Fock equation for periodic
molecular systems based on the Bloch theory \cite{Bloch:1925}
has been addressed in the literature for more than forty years ago,
and nowadays there are several implementations mostly relying on the analytic 
treatment of arising integral operators \cite{CRYSTAL:2000,CRYSCOR:12,GAUSS:09}.
The mathematical analysis of spectral problems for PDEs with the
periodic-type coefficients was an attractive topic in the recent decade, see
\cite{CancesDeLe:08,CanEhrMad:2012,Ortn:ArX} and the references therein. 
However, the systematic developments and optimization of the 
basic numerical algorithms in the Hartree-Fock calculations for 
large lattice structured compounds still are largely unexplored. 

Grid-based approaches for single molecules and moderate size systems based on the locally adaptive grids 
and multiresolution techniques have been discussed (see 
\cite{HaFaYaBeyl:04,SaadRev:10,Frediani:13,CanEhrMad:2012,Ortn:ArX,BiVale:11,RahOsel:13} 
and references therein). 

In this paper, we consider the Hartree-Fock equation for extended systems composed 
of atoms or molecules,  determined by means of an $(L_1, L_2, L_3)$ lattice in a box, 
both for open boundary conditions and in the periodic setting (supercell). 
The grid-based tensor-structured method is applied 
(see \cite{KhKhFl_Hart:09,VKH_solver:13,KhorSurv:10,VeBoKh:Ewald:14} and references therein) 
to calculate the core Hamiltonian in the localized 
Gaussian-type basis sets living on a box/periodic spatial lattice.
To perform numerical integration by using low-rank tensor formats we represent all basis
functions on the fine global grid covering the whole computational box (supercell).
The Hartree-Fock equation
for periodic systems is reformulated as the eigenvalue problem for large  
block circulant matrices which  are diagonalizable in the Fourier space,
that allows efficient computations on large lattices of size $L=\max\{L_1,L_2,L_3\}$.
In the following we consider the model problem  for the Fock operator confined to the core
Hamiltonian part. 

%

One of the severe difficulties in the Hartree-Fock calculations for lattice-structured periodic or
box-restricted systems is the computation of 3D lattice sums of a large number of long-distance 
Coulomb interaction potentials.
This problem is traditionally treated by the so-called Ewald-type summation 
techniques \cite{Ewald:27}
combined with the fast multipole expansion or/and FFT methods.
Notice that the traditional approaches for lattice summation by the Ewald-type methods
scale as $O(L^3 \log L)$ at least, for both periodic and box-type lattice sums.
We apply the recent  lattice summation method \cite{VeBoKh:Ewald:14} by assembled rank-structured
tensor decomposition, which reduces the asymptotic cost at this computational step 
to linear scaling in $L$, i.e. $O(L)$.

In the presented approach the Fock matrix is calculated directly by 3D grid-based tensor
numerical methods in the basis set of localized Gaussian-type-orbitals (GTO)
specified by $m_0$ elements in the 3D unit cell \cite{VeKh_Diss:10,VKH_solver:13}.
Hence, we do not impose explicitly the periodicity-like features of the 
solution by means of the approximation ansatz that is normally the case in the Bloch formalism.
Instead, the periodic properties of the considered system appear implicitly through the 
block structure in the Fock matrix. 
In periodic case this matrix is proved to inherit 
the three-level symmetric block circulant form, that allows its efficient 
diagonalization in the Fourier basis \cite{KaiSay_book:99,Davis}.
In the case of $d$-dimensional lattice ($d=1,2,3$), the weak overlap between lattice 
translated basis functions improves the block sparsity thus reducing the storage 
cost to $O(m_0^2 L)$, while the FFT-based diagonalization procedure amounts to
$O(m_0^2 L^d \log L)$ operations. 
Introducing the low-rank tensor structure into the diagonal blocks of 
the Fock matrix represented in the Fourier space, and using the initial block-circulant structure 
it becomes possible to further reduce the numerical costs to linear scaling in $L$, $O(m_0^2 L \log L)$. 
We present numerical tests in the case of a rectangular 
3D ``tube'' composed of $(L, 1, 1)$ cells with $L$ up to several hundred.

In the new approach one can potentially benefit from the 
additional flexibility that allows to treat  
slightly perturbed periodic systems in a straightforward way. 
Such situations may arise, for example, in the case of finite
extended systems in a box (open boundary conditions) also considered in this  paper,  
or for slightly perturbed periodic compounds, say for quasi-periodic systems with 
vacancies \cite{BGKh:12}. 
The proposed numerical scheme can be applied in the framework of 
self-consistent Hartree-Fock calculations, in particular, in the reduced 
Hartree-Fock model \cite{CancesDeLe:08},
where the similar block-structure in the Fock matrix can be observed.
The Wannier-type basis  constructed by 
the lattice translation of the initial localized molecular orbitals precomputed 
on the reference unit cell, can be also adapted to our framework.

Furthermore, the arising block-structured matrix representing the stiffness matrix 
$H$ of the core Hamiltonian, as well as some auxiliary function-related tensors, can be shown
to be well suited for further optimization by imposing the low-rank tensor formats, and in particular, 
the quantics-TT (QTT) tensor approximation \cite{KhQuant:09} of long vectors, 
which especially benefits in the limiting case  of large $L$-periodic systems. 
In the QTT approach the algebraic operations on the 
3D $n\times n\times n$ Cartesian grid can be implemented with logarithmic cost $O(\log n)$.
Literature surveys on tensor algebra and rank-structured tensor methods for multi-dimensional PDEs can be found in 
\cite{Kolda,KhorSurv:10,GraKresTo:13}, see also \cite{HaKhSaTy:08,DoKhSavOs_mEIG:13}.

The rest of the paper is organized as follows. 
Section \ref{sec_MLBlock-circ} recalls the main properties of the multilevel block 
circulant matrices with special focus on their diagonalization by FFT. 
Section \ref{sec:core_H} includes the main results on the analysis of
core Hamiltonian on lattice structured compounds. 
In particular, section \ref{Core_Hamil} describes the tensor-structured calculation of the 
core Hamiltonian for large lattice-type molecular/atomic systems.
We recall tensor-structured calculation of the Laplace operator and
fast summation of lattice potentials by assembled canonical tensors.
The complexity reduction due to low-rank tensor structures in the
matrix blocks is discussed (see Proposition \ref{prop:low_rank_coef}). 
Section \ref{sec:Core_Ham_period_FFT} discusses in detail the block circulant 
structure of the core Hamiltonian and presents numerical illustrations for 
$(L,1,1)$ lattice systems. Appendix
recalls the classical results on the properties of block circulant/Toeplitz matrices.

\section{Diagonalizing  multilevel block circulant matrices}
\label{sec_MLBlock-circ}

The direct Hartree-Fock calculations for lattice structured systems 
in the localized GTO-type basis lead to the 
symmetric block circulant/Toeplitz matrices (see Appendix \ref{sec_Append:block-circ}), 
where the first-level blocks, $A_0,...,A_{L-1}$, may have further block structures.
In particular, the Galerkin approximation of the 3D Hartree-Fock core Hamiltonian 
in periodic setting leads to the symmetric, three-level block circulant matrix.

\subsection{Multilevel block circulant/Toeplitz matrices}
\label{ssec:MLblock-circ}

In this section we consider the extension of (one-level) block circulant matrices
described in Appendix.
First, we recall the main notions of multilevel block circulant (MBC) matrices 
with the particular focus on the three-level case. 
Given a multi-index ${\bf L}=(L_1, L_2, L_3)$,
we denote $|{\bf L}|=(L_1,\, L_2,\,  L_3)$. 
A matrix class ${\cal BC} (d,{\bf L},m_0)$ ($d=1,2,3$)
of $d$-level block circulant matrices can be introduced by the following recursion.
\begin{definition}\label{def:Bcirc}
For $d=1$, define a class of one-level block circulant matrices by
${\cal BC} (1,{\bf L},m)\equiv {\cal BC} (L_1,m)$ (see Appendix), where ${\bf L}=(L_1,1,1)$.
For $d=2$, we say that a matrix 
$A\in \mathbb{R}^{|{\bf L}|m_0 \times |{\bf L}| m_0}$ belongs to a class
${\cal BC} (d,{\bf L},m_0)$ if
\[
 A = \operatorname{bcirc}(A_1,...,A_{L_1})\quad \mbox{with}\quad 
 A_j\in {\cal BC}(d-1,{\bf L}_{[1]},m_0),\; j=1,...,L_1,
\]
where ${\bf L}_{[1]}=(L_2,L_3)\in \mathbb{N}^{d-1} $. Similar recursion applies to the case $d=3$.
\end{definition}

Likewise to the case of one-level BC matrices (see Appendix), it can be seen that a matrix 
$A \in {\cal BC} (d,{\bf L},m_0)$, $d=1,2,3$, 
of size $|{\bf L}| m_0 \times  |{\bf L}| m_0$ is completely defined (parametrized) by a $d$th order 
matrix-valued tensor 
${\bf A}=[A_{k_1 ... k_d}]$ of size $L_1\times ... \times L_d $,
($k_\ell=1,...,L_\ell$, $\ell=1,...,d$), with $m_0\times m_0$ matrix entries $A_{k_1 ... k_d}$,  
obtained by folding of the generating first column vector in $A$. 

A diagonalization of a MBC matrix is based on representation via a sequence of cycling permutation matrices 
$\pi_{L_1}, ...,\pi_{L_d}$, $d=1,2,3$. Recall that the $q$-dimensional Fourier transform (FT) 
can be defined via the Kronecker product of the univariate FT matrices (rank-$1$ operator), 
$$
F_{\bf L}=F_{L_1}\otimes \cdots \otimes F_{L_d}.
$$
The block-diagonal form of a MBC matrix is well known in the literature.
Here we prove the diagonal representation in a form that is useful for the description
 of numerical algorithms. 
To that end we generalize the notations ${\cal T}_L$ and $\widehat{A}$ 
(see Section \ref{sec_Append:block-circ}) to the class of multilevel matrices.
We denote by $\widehat{A}\in \mathbb{R}^{|{\bf L}|m_0\times m_0}$ the first 
block column of a matrix $A\in {\cal BC} (d,{\bf L},m_0)$, with a shorthand notation 
$$
\widehat{A}=[A_0,A_1,...,A_{L-1}]^T,
$$ 
so that a $|{\bf L}|\times m_0 \times m_0$ tensor ${\cal T}_{\bf L} \widehat{A}$
represents slice-wise all generating $m_0\times m_0$ matrix blocks.
Notice that in the case $m_0=1$, $\widehat{A}\in \mathbb{R}^{|{\bf L}|}$ represents the first column
of the matrix $A$.
Now the Fourier transform $F_{\bf L}$ applies to ${\cal T}_{\bf L} \widehat{A}$ columnwise, 
and the backward reshaping of the resultant tensor, ${\cal T}_{\bf L}'$, returns 
an $|{\bf L}|m_0 \times m_0$ block matrix column. 

\begin{lemma}\label{lem:DiagMLCirc}
A matrix $A\in {\cal BC} (d,{\bf L},m_0)$, is block-diagonalysed  by 
the Fourier transform,
\begin{equation} \label{eqn:DiagMLcirc}
A= (F_{\bf L}^\ast \otimes I_{m_0}) \operatorname{bdiag} \{ \bar{A}_{\bf 0}, \bar{A}_{\bf 1},\ldots , 
\bar{A}_{\bf L-1}\}(F_{\bf L} \otimes I_{m_0}),
\end{equation} 
where
\[
  \left[ \bar{A}_{\bf 0}, \bar{A}_{\bf 1},\ldots , \bar{A}_{\bf L-1}\right]^T = 
 {\cal T}_{\bf L}'(F_{\bf L} ({\cal T}_{\bf L} \widehat{A})).
\]
\end{lemma}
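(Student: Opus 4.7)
The plan is to proceed by induction on the number of levels $d$, using the one-level diagonalization recalled in the Appendix as the base case. Throughout, I will lean on the Kronecker product factorization $F_{\mathbf{L}} = F_{L_1}\otimes F_{L_2}\otimes \cdots \otimes F_{L_d}$ and on the elementary fact that the class $\mathcal{BC}(d-1,\mathbf{L}_{[1]},m_0)$ is closed under linear combinations, so that scalar combinations of multilevel block circulant blocks remain multilevel block circulant.

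For the base case $d=1$, the identity reduces to the classical diagonalization of a one-level block circulant matrix by $F_{L_1}\otimes I_{m_0}$, as stated in the Appendix; the required blocks are exactly the DFT applied along the single level of generating blocks, so that $\mathcal{T}_{L_1}'(F_{L_1}\mathcal{T}_{L_1}\widehat{A})$ collects them in the right order. For the inductive step, I would write $A=\operatorname{bcirc}(A_1,\ldots,A_{L_1})$ with $A_j\in\mathcal{BC}(d-1,\mathbf{L}_{[1]},m_0)$, and first apply the one-level result with the ``inner block size'' set to $|\mathbf{L}_{[1]}|m_0$. This yields
\[
A=(F_{L_1}^\ast\otimes I_{|\mathbf{L}_{[1]}|m_0})\operatorname{bdiag}\{B_0,\ldots,B_{L_1-1}\}(F_{L_1}\otimes I_{|\mathbf{L}_{[1]}|m_0}),
\]
where $B_{k_1}=\sum_{j=0}^{L_1-1}\omega_{L_1}^{k_1 j}A_j$ belongs to $\mathcal{BC}(d-1,\mathbf{L}_{[1]},m_0)$ by the closure remark. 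The induction hypothesis then diagonalizes each $B_{k_1}$ by $F_{\mathbf{L}_{[1]}}\otimes I_{m_0}$ into blocks $\bar{A}_{(k_1,\mathbf{k}_{[1]})}$ obtained from the DFT of the generating sub-tensor of $B_{k_1}$.

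Next I would combine the two stages by the mixed-product rule $(F_{L_1}\otimes I)(I\otimes(F_{\mathbf{L}_{[1]}}\otimes I_{m_0}))=F_{L_1}\otimes F_{\mathbf{L}_{[1]}}\otimes I_{m_0}=F_{\mathbf{L}}\otimes I_{m_0}$, collapsing the nested block-diagonal into a single $\operatorname{bdiag}$ indexed by $\mathbf{k}=(k_1,\ldots,k_d)$. What remains is to identify the resulting diagonal blocks with the ones in the statement. Here I would argue that the generating sub-tensor of $B_{k_1}$ is precisely the $k_1$-slice (in the first mode) of $F_{L_1}$ applied along the first mode of $\mathcal{T}_{\mathbf{L}}\widehat{A}$, so iterating the DFTs mode by mode amounts to applying $F_{\mathbf{L}}$ to $\mathcal{T}_{\mathbf{L}}\widehat{A}$ column-wise, and $\mathcal{T}_{\mathbf{L}}'$ reassembles this into the block column $[\bar{A}_{\mathbf{0}},\ldots,\bar{A}_{\mathbf{L}-\mathbf{1}}]^T$.

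The main obstacle is bookkeeping rather than mathematics: one must align the tensor reshaping operators $\mathcal{T}_{\mathbf{L}}$, $\mathcal{T}_{\mathbf{L}}'$ with the order in which the partial Fourier transforms act on the various modes, and verify that the linear ordering of $\mathbf{k}$ implicit in $\operatorname{bdiag}$ matches the reshaping convention used in $\mathcal{T}_{\mathbf{L}}'$. Once a consistent convention for vectorization is fixed (e.g.\ lexicographic in $\mathbf{k}$, consistent with the Kronecker ordering of $F_{L_1}\otimes\cdots\otimes F_{L_d}$), the identification of the blocks follows by a direct index comparison, completing the induction.
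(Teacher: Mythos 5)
Your proof is correct, and it takes a genuinely different organizational route from the paper. The paper's proof (for concreteness, for $d=3$) first fully unrolls the recursion to get the $d$-fold Kronecker polynomial representation $A=\sum_{\bf k}\pi_{L_1}^{k_1}\otimes\cdots\otimes\pi_{L_d}^{k_d}\otimes A_{\bf k}$, and only then diagonalizes all the cyclic-shift factors simultaneously using $\pi_{L_\ell}=F_{L_\ell}^\ast D_{L_\ell}F_{L_\ell}$ and the Kronecker mixed-product rule, reading off the block-diagonal form directly. You instead do an outside-in induction on the number of levels $d$: you peel off the outermost level by the one-level diagonalization from the Appendix, observe that the resulting Fourier-transformed blocks $B_{k_1}=\sum_j\omega_{L_1}^{k_1 j}A_j$ remain $(d-1)$-level block circulant because $\mathcal{BC}(d-1,\mathbf{L}_{[1]},m_0)$ is closed under (complex) linear combinations, apply the induction hypothesis to each $B_{k_1}$, and collapse the nested block-diagonals via the mixed-product rule. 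Your route explicitly isolates the closure-under-linear-combination property as a lemma (which the paper's simultaneous-expansion route never needs to name), and it yields the general-$d$ statement cleanly by construction, whereas the paper only writes out $d=3$ and appeals to "similar argument" for larger $d$. The paper's route, in turn, has the advantage that the diagonal blocks $\bar{A}_{\bf k}$ appear immediately as the multidimensional DFT of the coefficient tensor ${\bf A}=[A_{\bf k}]$, so the identification with ${\cal T}_{\bf L}'(F_{\bf L}({\cal T}_{\bf L}\widehat{A}))$ is essentially by definition and requires no extra bookkeeping across induction levels. Both arguments are correct and rest on the same three core facts (Kronecker polynomial form, shift-matrix diagonalization, mixed-product rule), but they package them differently.
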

\begin{proof} First, we confine ourself to the case of three-level matrices, i.e. $d=3$, 
and proceed 
\begin{align*}
A & = \sum\limits^{L_1 -1}_{k_1=0} \pi_{L_1}^{k_1} \otimes {\bf A}_{k_1} \\  \nonumber 
  & = \sum\limits^{L_1 -1}_{k_1=0} \pi_{L_1}^{k_1}\otimes 
      (\sum\limits^{L_2 -1}_{k_2=0} \pi_{L_2}^{k_2}\otimes {\bf A}_{k_1 k_2} )=
      \sum\limits^{L_1 -1}_{k_1=0}\sum\limits^{L_2 -1}_{k_2=0}
      \pi_{L_1}^{k_1}\otimes \pi_{L_2}^{k_2}\otimes {\bf A}_{k_1 k_2}  \\ \nonumber 
  & = \sum\limits^{L_1 -1}_{k_1=0}\sum\limits^{L_2 -1}_{k_2=0}\sum\limits^{L_3 -1}_{k_3=0} 
       \pi_{L_1}^{k_1}\otimes \pi_{L_2}^{k_2}\otimes \pi_{L_3}^{k_3}\otimes A_{k_1 k_2 k_3},    \nonumber
\end{align*} 
where ${\bf A}_{k_1}\in \mathbb{R}^{L_2\times L_3 \times m_0\times m_0}$,
${\bf A}_{k_1 k_2} \in  \mathbb{R}^{L_3 \times m_0\times m_0}$
and $A_{k_1 k_2 k_3}\in \mathbb{R}^{m_0\times m_0}$. 

Diagonalizing the periodic shift matrices $\pi_{L_1}^{k_1}, \pi_{L_2}^{k_2}$, and 
$\pi_{L_3}^{k_3}$ via the Fourier transform (see Appendix), we arrive at
\begin{align}\label{eqn:MLbcircDiag2}
A & = (F_{\bf L}^\ast \otimes I_{m_0}) \left[\sum\limits^{L_1 -1}_{k_1=0}\sum\limits^{L_2 -1}_{k_2=0}
      \sum\limits^{L_3 -1}_{k_3=0}
      D_{L_1}^{k_1}\otimes D_{L_2}^{k_2}\otimes D_{L_3}^{k_3}\otimes A_{k_1 k_2 k_3} \right]
 (F_{\bf L}\otimes I_{m_0})
      \\ 
  & = (F_{\bf L}^\ast \otimes I_{m_0}) 
 \mbox{bdiag}_{m_0\times m_0} \{{\cal T}_{\bf L}'(F_{\bf L} ({\cal T}_{\bf L} \widehat{A}))\} 
(F_{\bf L}\otimes I_{m_0}),\nonumber
 \end{align}
with the monomials of diagonal matrices $D_{L_\ell}^{k_\ell}\in \mathbb{R}^{L_\ell \times L_\ell}$, 
$\ell=1,2,3$ are defined by (\ref{eqn:diagshift}).

The generalization to the case $d >3$ can be proven by the similar argument.
\end{proof}

Taking into account representation (\ref{eqn:symBc}),
the multilevel symmetric block circulant matrix can be described in form 
(\ref{eqn:DiagMLcirc}), such that all real-valued diagonal blocks remain symmetric.

Similar to Definition \ref{def:Bcirc}, a matrix class ${\cal BT}_s (d,{\bf L},m_0)$ 
of symmetric $d$-level block Toeplitz matrices can be introduced by the following recursion.
\begin{definition}\label{def:BToepl}
For $d=1$, ${\cal BT}_s (1,{\bf L},m_0)\equiv {\cal BT}_s (L_1,m_0)$ is the class 
of one-level symmetric block circulant matrices with ${\bf L}=(L_1,1,1)$.
For $d=2$ we say that a matrix 
$A\in \mathbb{R}^{|{\bf L}|m \times |{\bf L}| m_0}$ belongs to a class
${\cal BT}_s (d,{\bf L},m_0)$ if
\[
 A= \operatorname{btoepl}_s(A_1,...,A_{L_1})\quad 
 \mbox{with}\quad A_j\in {\cal BT}_s(d-1,{\bf L_{[1]}},m_0),\; j=1,...,L_1.
\]
Similar recursion applies to the case $d=3$.
\end{definition}
The following remark compares the properties of circulant and Toeplitz matrices. 
\begin{remark}\label{rem:BToepl}
A block Toeplitz matrix  does not allow diaginalization by FT as it was the case for 
block circulant matrices.
However, it is well known that a block Toeplitz matrix can be extended to the 
double-size (at each level)
block circulant that makes it possible the efficient matrix-vector multiplication, 
and, in particular, the efficient application of power method for finding 
the senior eigenvalues.
\end{remark}

\subsection{Low-rank tensor structure in matrix blocks}\label{ssec:Tensor_bcirc}

In the case $d=3$, the general block-diagonal representation  
(\ref{eqn:DiagMLcirc}) - (\ref{eqn:MLbcircDiag2}) takes form
\[
 A= (F_{\bf L}^\ast \otimes I_{m_0}) (\sum\limits^{L_1 -1}_{k_1=0}\sum\limits^{L_2 -1}_{k_2=0}
      \sum\limits^{L_3 -1}_{k_3=0}
  D_{L_1}^{k_1}\otimes D_{L_2}^{k_2}\otimes D_{L_3}^{k_3}\otimes A_{k_1 k_2 k_3} ) 
  (F_{\bf L}\otimes I_{m_0}),
\]
that allows the reduced storage costs of order $O(|{\bf L}| m_0^2)$, where $|{\bf L}|=L^3$.
For large $L$ the numerical cost may become prohibitive. 
However, the above representation indicates that the further storage and complexity reduction 
becomes possible if
the third-order coefficients tensor ${\bf A}= [A_{k_1 k_2 k_3}]$, $k_\ell=0,...,L_\ell-1$,  
with the matrix entries $A_{k_1 k_2 k_3}\in \mathbb{R}^{m_0\times m_0}$, 
allows some low-rank tensor representation (approximation) in the multiindex ${\bf k}$
described by a small number of parameters.


To fix the idea, let us assume the existence of rank-$1$ separable matrix factorization,
\[
 A_{k_1 k_2 k_3} = A_{k_1}^{(1)}\odot A_{k_2}^{(2)} \odot A_{k_3}^{(3)},
 \quad A_{k_1}^{(1)}, A_{k_2}^{(2)},A_{k_3}^{(3)} \in \mathbb{R}^{m_0\times m_0},
\quad \mbox{for} \quad k_\ell=0,...,L_\ell-1,
\]
where $\odot$ denotes the Hadamard (pointwise) product of matrices.
The latted representation can be written in 
the factorized tensor-product form
\begin{align*}\label{eqn:bcirc_R1}
& D_{L_1}^{k_1} \otimes D_{L_2}^{k_2}\otimes D_{L_3}^{k_3}\otimes A_{k_1 k_2 k_3} \\
 = &
((D_{L_1}^{k_1}\otimes A_{k_1}^{(1)}) \otimes I_{L_2}\otimes I_{L_3}  )\odot
 (I_{L_1} \otimes (D_{L_2}^{k_2} \otimes A_{k_2}^{(2)})\otimes I_{L_3} ) \odot
 (I_{L_1} \otimes I_{L_2}\otimes (D_{L_3}^{k_3} \otimes A_{k_3}^{(3)})). 
\end{align*}
Given $\ell \in \{1,...,d\}$ and a matrix $A\in \mathbb{R}^{L_\ell \times L_\ell}$, define 
the {\it tensor prolongation} mapping, 
${\cal P}_\ell: \mathbb{R}^{L_\ell\times L_\ell}\to \mathbb{R}^{|{\bf L}|\times |{\bf L}|}$, by
\begin{equation} \label{eqn:Tensor_prolong}
{\cal P}_\ell(A):= \bigotimes_{i=1}^{\ell-1}I_{L_i}\otimes A \bigotimes_{i=\ell+1}^{d}I_{L_i}.
\end{equation}
This leads to the powerful matrix factorization
\begin{align*}
 A=& (F_{\bf n}^\ast \otimes I_m) 
 \left[\sum\limits^{L_1 -1}_{k_1=0}  {\cal P}_1(D_{L_1}^{k_1}\otimes A_{k_1}^{(1)}) \odot
 \sum\limits^{L_2 -1}_{k_2=0}   {\cal P}_2(D_{L_2}^{k_2} \otimes A_{k_2}^{(2)}) \odot
 \sum\limits^{L_3 -1}_{k_3=0}  {\cal P}_3(D_{L_3}^{k_3} \otimes A_{k_3}^{(3)})\right]
 (F_{\bf L}\otimes I_{m_0}),\\
=& (F_{\bf L}^\ast \otimes I_m) 
 \left[ {\cal P}_1(\sum\limits^{L_1-1 -1}_{k_1=0} D_{L_1}^{k_1}\otimes A_{k_1}^{(1)}) \odot
    {\cal P}_2(\sum\limits^{L_2 -1}_{k_2=0} D_{L_2}^{k_2} \otimes A_{k_2}^{(2)}) \odot
   {\cal P}_3(\sum\limits^{L_3 -1}_{k_3=0} D_{L_3}^{k_3} \otimes A_{k_3}^{(3)})\right]
 (F_{\bf n}\otimes I_{m_0}),\\
 = & (F_{\bf L}^\ast \otimes I_{m_0})
 \left[ {\cal P}_1(\mbox{bdiag} F_{L_1} A^{(1)}) \odot
  {\cal P}_2(\mbox{bdiag} F_{L_2} A^{(2)})\odot
  {\cal P}_3(\mbox{bdiag} F_{L_3} \otimes A^{(3)})\right]
 (F_{\bf L}\otimes I_{m_0}),
\end{align*}
where the tensor $A^{(\ell)}\in \mathbb{R}^{L_\ell\times m_0 \times m_0}$ 
is defined by concatenation 
$A^{(\ell)}=[A_{0}^{(\ell)},...,A_{L_\ell-1}^{(\ell)}]^T$,
and the tensor prolongation ${\cal P}_\ell$ is defined by (\ref{eqn:Tensor_prolong}).
This representation requires only 1D Fourier transforms thus reducing the numerical cost
to 
$$
O(m_0^2 {\sum}_{\ell=1}^d L_\ell \log L_\ell).
$$ 
Moreover, and it is even more important, that the eigenvalue 
problem  for the large matrix $A$ now reduces to only 
 $L_1+L_2+L_3 \ll L_1 L_2 L_3$ small $m_0 \times m_0$ matrix eigenvalue problems.

The above block-diagonal representation for $d=3$  
generalizes easily to the case of arbitrary dimension $d$. Finally, we prove the following
general result.
\begin{theorem}\label{thm:tens_FFT}
 Introduce the notation 
$D_{\bf L}^{\bf k}=  D_{L_1}^{k_1}\otimes D_{L_2}^{k_2}\otimes \cdots \otimes D_{L_d}^{k_d}$,
then we have
\[
 A= (F_{\bf L}^\ast \otimes I_{m_0})(\sum\limits^{\bf L -1}_{\bf k=0}
D_{\bf L}^{\bf k}\otimes  A_{\bf k}) (F_{\bf L} \otimes I_{m_0}).
\]
Assume the separability of a tensor $[A_{\bf k}]$ in  the ${\bf k}$ space, 
we arrive at the factorized block-diagonal form of $A$
\[
A= (F_{\bf L}^\ast \otimes I_{m_0})
\left[ {\cal P}_1(F_{L_1} A^{(1)}) \odot
  {\cal P}_2(F_{L_2} A^{(2)})\odot \dots  \odot
  {\cal P}_d (F_{L_d} A^{(d)})\right]
  (F_{\bf L}\otimes I_{m_0}).
\] 
\end{theorem}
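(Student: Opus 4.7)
The plan is to mirror the $d=3$ derivation already spelled out in the excerpt, using induction on $d$ together with the mixed-product property of the Kronecker product. The argument splits naturally into two pieces: first establish the block-diagonal representation (this uses only the multilevel circulant structure), then impose the separability hypothesis on $[A_{\bf k}]$ to obtain the factored form.

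First, I would unfold the recursive Definition \ref{def:Bcirc} to write any $A\in{\cal BC}(d,{\bf L},m_0)$ as a $d$-fold nested sum
\[
A \;=\; \sum_{k_1=0}^{L_1-1}\cdots\sum_{k_d=0}^{L_d-1}\;
\pi_{L_1}^{k_1}\otimes \pi_{L_2}^{k_2}\otimes\cdots\otimes \pi_{L_d}^{k_d}\otimes A_{\bf k},
\]
exactly as was done for $d=3$. A clean way to do this is induction on $d$: the base case $d=1$ is the standard one-level block circulant representation recalled in the Appendix, and the inductive step peels off the outermost circulant layer $\operatorname{bcirc}(A_1,\ldots,A_{L_1})=\sum_{k_1}\pi_{L_1}^{k_1}\otimes A_{k_1}$ and applies the hypothesis to each $A_{k_1}\in{\cal BC}(d-1,{\bf L}_{[1]},m_0)$.

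Next, diagonalize each shift via $\pi_{L_\ell}^{k_\ell} = F_{L_\ell}^\ast D_{L_\ell}^{k_\ell} F_{L_\ell}$ and use the mixed-product identity $(F_{L_1}^\ast\otimes\cdots\otimes F_{L_d}^\ast)(D_{L_1}^{k_1}\otimes\cdots\otimes D_{L_d}^{k_d})(F_{L_1}\otimes\cdots\otimes F_{L_d}) = \pi_{L_1}^{k_1}\otimes\cdots\otimes \pi_{L_d}^{k_d}$ together with $F_{\bf L} = F_{L_1}\otimes\cdots\otimes F_{L_d}$. Pulling $(F_{\bf L}^\ast\otimes I_{m_0})$ and $(F_{\bf L}\otimes I_{m_0})$ out of every term of the sum yields the first claimed formula with the inner bracket $\sum_{\bf k} D_{\bf L}^{\bf k}\otimes A_{\bf k}$, once one notes that the $m_0\times m_0$ block factor commutes through the tensor factors (its Fourier conjugation acts trivially since it sits in the last slot, wrapped by $I_{m_0}$ on both sides).

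For the factored form, I would substitute the rank-one separable ansatz $A_{\bf k}=A_{k_1}^{(1)}\odot\cdots\odot A_{k_d}^{(d)}$ and apply the key algebraic identity
\[
D_{L_1}^{k_1}\otimes\cdots\otimes D_{L_d}^{k_d}\otimes (A_{k_1}^{(1)}\odot\cdots\odot A_{k_d}^{(d)})
\;=\; \bigodot_{\ell=1}^{d}\; {\cal P}_\ell\bigl(D_{L_\ell}^{k_\ell}\otimes A_{k_\ell}^{(\ell)}\bigr),
\]
which is the $d$-dimensional analogue of the displayed identity preceding Theorem~\ref{thm:tens_FFT} (its verification reduces to checking entrywise that a Kronecker product of factors, each padded by identities in the remaining slots via ${\cal P}_\ell$, produces exactly the same tensor of entries as the direct Kronecker product, and that the $m_0\times m_0$ Hadamard product commutes with the preceding tensor factors because of the slot-wise identity padding). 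Once this identity is in hand, the summations over different $k_\ell$ decouple because each ${\cal P}_\ell$-image only involves $k_\ell$, and the Hadamard product commutes with summation over independent indices. This yields
\[
\sum_{\bf k} D_{\bf L}^{\bf k}\otimes A_{\bf k}
\;=\; \bigodot_{\ell=1}^{d}\; {\cal P}_\ell\!\Bigl(\sum_{k_\ell=0}^{L_\ell-1} D_{L_\ell}^{k_\ell}\otimes A_{k_\ell}^{(\ell)}\Bigr),
\]
and each inner sum $\sum_{k_\ell} D_{L_\ell}^{k_\ell}\otimes A_{k_\ell}^{(\ell)}$ collapses to the block-diagonal matrix $\operatorname{bdiag}(F_{L_\ell} A^{(\ell)})$, exactly as in the $d=3$ case.

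The main obstacle, and the only step that is not pure bookkeeping, is verifying the Kronecker/Hadamard commutation identity above in full dimension $d$: one must carefully track that the Hadamard product of ${\cal P}_\ell$-prolonged matrices really reproduces the outer Kronecker product when the innermost factor itself is a Hadamard product of $m_0\times m_0$ blocks. Everything else is a straightforward induction plus repeated use of the mixed-product property of $\otimes$; I would therefore spend the bulk of the write-up on this identity and then assemble the pieces in a short concluding paragraph.
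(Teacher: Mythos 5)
Your proposal is correct and follows essentially the same route as the paper: the authors work out the $d=3$ case explicitly in Section~\ref{ssec:Tensor_bcirc} (nested Kronecker expansion, Fourier diagonalization of each cyclic shift, the Hadamard/prolongation factorization identity, then decoupling of the sums), and Theorem~\ref{thm:tens_FFT} is stated with the remark that this generalizes to arbitrary $d$ without a separate proof. You are simply carrying out that generalization explicitly, and your observation that the Kronecker/Hadamard commutation identity is the only non-bookkeeping step is a fair assessment --- it reduces to $(B_1\otimes\cdots\otimes B_d)\odot(C_1\otimes\cdots\otimes C_d)=(B_1\odot C_1)\otimes\cdots\otimes(B_d\odot C_d)$ together with $D\odot I=D$ for diagonal $D$.
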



The rank-$1$ decomposition was considered for the ease of exposition only.
The above low-rank representations can be easily generalized to the case of 
canonical or Tucker formats in ${\bf k}$ space (see Proposition \ref{prop:low_rank_coef} below).

Notice that in the practically interesting 3D case
the use of MPS/TT type factorizations does not take the advantage over the Tucker format
since the Tucker and MPS ranks in 3D  appear to be close to each other.
Indeed, the HOSVD for a tensor of order $3$ leads to the same rank estimates for both
the Tucker and MPS/TT tensor formats.

\section{Core Hamiltonian for lattice structured compounds}
\label{sec:core_H}

In this section we  analyze the structure of 
the Galerkin matrix for the core Hamiltonian part in the Fock operator 
with respect to the localized GTO basis replicated over a lattice,
$\{g_m (x) \}_{1\leq m \leq N_b}, x \in {\mathbb{R}^3}$ in a box, or in 
a supercell with the priodic boundary conditions.

\subsection{The core Hamiltonian in a GTO basis set}
\label{Core_Hamil}

The nonlinear Fock operator ${\cal F}$
in the governing Hartree-Fock eigenvalue problem,   
describing the ground state energy for $2N_b$-electron system, is defined by
\[
\left[-\frac{1}{2} \Delta - v_c(x) + 
 \int_{\mathbb{R}^3} \frac{\rho({ y})}{\|{ x}-{ y}\|}\, d{ y}\right] \varphi_i({ x})
- \int_{\mathbb{R}^3} \; \frac{\tau({ x}, { y})}{\|{x} - { y}\|}\, \varphi_i({ y}) d{ y}
 = \lambda_i \, \varphi_i({ x}), 
 \quad x\in \mathbb{R}^3,
\] 
where $i =1,...,N_{orb}$. The linear part in the Fock operator is presented by
the core Hamiltonian 
\begin{equation}\label{eqn:HFcore}
{\cal H}=-\frac{1}{2} \Delta - v_c,
\end{equation}
while the nonlinear Hartree potential and exchange operators, 
depend on the unknown eigenfunctions (molecular orbitals)
comprising the electron density, $\rho({ y})= 2 \tau(y,y)$, and the density matrix, 
$
\tau(x,y) =\sum\limits^{N_{orb}}_{i=1} \varphi_i (x)\varphi_i (y),\quad x,y\in \mathbb{R}^3,
$ 
respectively. 
The electrostatic potential in the core Hamiltonian is defined by a sum
\begin{equation} \label{eqn:ElectrostPot}
v_c(x)= \sum_{\nu=1}^{M}\frac{Z_\nu}{\|{x} -a_\nu \|},\quad
Z_\nu >0, \;\; x,a_\nu\in \mathbb{R}^3,
\end{equation}
where $M$ is the total number of nuclei in the system, 
$a_\nu$,  $Z_\nu$, represent their Cartesian coordinates and the respective charge numbers.
Here $\|\cdot \|$ means the distance function in $\mathbb{R}^3$.

Given a general set of localized GTO basis functions $\{g_\mu\}$ ($\mu=1,...,N_b$),
the occupied molecular orbitals $\psi_i$ are approximated by
 \begin{equation}\label{expand}
\psi_i=\sum\limits_{\mu=1}^{N_b} C_{\mu i} g_\mu, \quad i=1,...,N_{orb},
\end{equation}
with the unknown coefficients matrix
$C=\{C_{ \mu i} \}\in \mathbb{R}^{N_b \times  N_{orb}}$ obtained as the solution 
of the discretized Hartree-Fock equation with respect to $\{g_\mu\}$, 
and described by $N_b\times N_b$ Fock matrix.
Since the number of basis functions scales cubically in $L$, $N_b = m_0 L^3$, 
the calculation of the Fock matrix may become prohibitive as $L$ increases 
($m_0$ is the number of basis functions in the unit cell).

In what follows we describe the grid-based tensor method for the  
block-structured representation
of the core Hamiltonian in the Fock matrix in a box and in a supercell subject to
the periodic boundary conditions. 
The stiffness matrix $H=\{h_{\mu \nu}\}$ of the core Hamiltonian
(\ref{eqn:HFcore}) is represented by the single-electron integrals, 
 \begin{equation}\label{eqn:Core_Ham}
h_{\mu \nu}= \frac{1}{2} \int_{\mathbb{R}^3}\nabla g_\mu \cdot \nabla g_\nu dx -
\int_{\mathbb{R}^3} v_c(x) g_\mu g_\nu dx, \quad 1\leq \mu, \nu \leq N_b,
\end{equation}
such that the resulting $N_b\times N_b$ Galerkin system of equations
governed by  the reduced Fock matrix $ H$ reads as follows
\begin{align*} \label{eqn:HF discr}
  H C &= SC \Lambda, \quad \Lambda= diag(\lambda_1,...,\lambda_{N_{orb}}), \\
  C^T SC   &=  I_N,   \nonumber
\end{align*}
where the mass (overlap) matrix $S=\{s_{\mu \nu} \}_{1\leq \mu, \nu \leq N_b}$, is given by
$
s_{\mu \nu}=\int_{\mathbb{R}^3} g_\mu g_\nu  dx.
$

The numerically extensive part in (\ref{eqn:Core_Ham}) is related to the integration with 
the large sum of lattice translated Newton kernels. 
Indeed, let $M_0$ be the number of nuclei in the unit cell, then
the expensive calculations are due to the summation over $M_0 L^3$ Newton kernels,  
and further spacial integration of this sum with the large set of localized atomic orbitals 
$\{g_\mu\}$, ($\mu=1,...,N_b$), where $N_b$ is of order $m_0 L^3$.

The present approach solves this problem by using the fast and accurate 
grid-based tensor method for evaluation 
of the electrostatic potential $v_c$ defined by  the lattice sum 
in (\ref{eqn:ElectrostPot}),  see \cite{VeBoKh:Ewald:14}, 
and subsequent efficient computation and structural representation of
the stiffness matrix $V_c$,
\[
V_c=[V_{\mu \nu}]:\quad  V_{\mu \nu}= \int_{\mathbb{R}^3} v_c(x) g_\mu g_\nu dx, 
\quad 1\leq \mu, \nu \leq N_b,
\]
by numerical integration by using the low-rank tensor representation on the grid
of all functions involved.

This approach is applicable to the large $L\times L \times L$ lattice.
In the next sections, we show that in the periodic case the resultant stiffness matrix 
$H=\{h_{\mu \nu}\}$ of the core Hamiltonian 
can be parametrized in the form of a symmetric, three-level block circulant matrix.
In the case of lattice system in a box the block structure of $H$ is a small perturbation
of the block Toeplitz matrix.


\subsection{Low-rank tensor form of the nuclear potential in a box} 
\label{ssec:nuclear}

We consider the nuclear (core) potential operator 
describing the Coulomb interaction of the electrons with the nuclei, see
(\ref{eqn:ElectrostPot}). 
In the scaled unit cell  $\Omega=[-b/2,b/2]^3$, 
we introduce the uniform $n \times n \times n$ rectangular Cartesian grid $\Omega_{n}$
with the mesh size $h=b/n$.
Let $\{ \psi_\textbf{i}\}$ be the set of tensor-product piecewise constant basis functions,
$  \psi_\textbf{i}(\textbf{x})=\prod_{\ell=1}^d \psi_{i_\ell}^{(\ell)}(x_\ell)$
for ${\bf i}=(i_1,i_2,i_3)\in  I \times I \times I  $,
$i_\ell \in I=\{1,...,n\}$.
The Newton kernel is discretized by the projection/collocation method in the form
of a third order tensor of size $n\times n \times n$, 
defined point-wise as
\begin{eqnarray}
\mathbf{P}:=[p_{\bf i}] \in \mathbb{R}^{n\times n \times n},  \quad
 p_{\bf i} = 
\int_{\mathbb{R}^3} \frac{\psi_{{\bf i}}({x})}{\|{x}\|} \,\, \mathrm{d}{x},
  \label{galten}
\end{eqnarray}
see \cite{KhKhFl_Hart:09,BeHaKh:08,VeKh_Diss:10,VKH_solver:13}.
Our low-rank canonical decomposition of the $3$rd order tensor $\mathbf{P}$ is based 
on using exponentially convergent $\operatorname*{sinc}$-quadratures for approximation 
of the Laplace-Gauss transform, see \cite{Stenger,GHK:05,HaKhtens:04I}, 
\[
 \frac{1}{z}= \frac{2}{\sqrt{\pi}}\int_{\mathbb{R}_+} e^{- z^2 t^2 } dt,
\]
which can be adapted to the Newton kernel by substitution $z=\sqrt{x_1^2 + x_2^2  + x_3^2}$.
Rational type approximation by exponential sums have been addressed in \cite{Braess:95,Braess:BookApTh}. 
We denote the resultant $R$-term  canonical representation by
\begin{equation} \label{eqn:sinc_general}
    \mathbf{P} \approx  \mathbf{P}_R 
= \sum\limits_{q=1}^{R} {\bf p}^{(1)}_q \otimes {\bf p}^{(2)}_q \otimes {\bf p}^{(3)}_q
\in \mathbb{R}^{n\times n \times n}.
\end{equation}
In a similar way, we also introduce the ``master tensor'',
 $\widetilde{\bf P}_R \in \mathbb{R}^{\widetilde{n}\times \widetilde{n} \times \widetilde{n}}$,
approximating the Newton kernel 
in the extended (accompanying) domain $\widetilde{\Omega} \supset \Omega$,
and associated with the grid parameter $\widetilde{n}=n_0+n$ (say, $n_0=n$),
\begin{equation*} \label{eqn:master_pot}
\widetilde{\bf P}_R= 
\sum\limits_{q=1}^{R} \widetilde{\bf p}^{(1)}_q \otimes 
\widetilde{\bf p}^{(2)}_q \otimes \widetilde{\bf p}^{(3)}_q
\in \mathbb{R}^{\widetilde{n}\times \widetilde{n} \times \widetilde{n}}.
\end{equation*}

The core potential for the molecule is approximated by the canonical tensor
\[
 {\bf P}_{c} = \sum_{\nu=1}^{M_0} Z_\nu {\bf P}_{{c},\nu}\approx \widehat{\bf P}_{c} 
\in \mathbb{R}^{n\times n \times n},
\]
with the rank bound $rank({\bf P}_{c})\leq M_0 R$,
where the rank-$R$ tensor ${\bf P}_{{c},\nu}$ represents the single Coulomb potential 
shifted according to coordinates of the corresponding nuclei, \cite{VeBoKh:Ewald:14},
\begin{equation} \label{eqn:core_tens}
 {\bf P}_{c,\nu} = {\cal W}_{\nu} \widetilde{\bf P}_R =  
\sum\limits_{q=1}^{R} {\cal W}_{\nu}^{(1)} \widetilde{\bf p}^{(1)}_q \otimes 
{\cal W}_{\nu}^{(2)} \widetilde{\bf p}^{(2)}_q 
\otimes {\cal W}_{\nu}^{(3)} \widetilde{\bf p}^{(3)}_q\in \mathbb{R}^{n\times n \times n},
\end{equation}
such that every rank-$R$ canonical tensor 
${\cal W}_{\nu} \widetilde{\bf P}_R \in \mathbb{R}^{n\times n \times n}$
is thought as a sub-tensor of the master tensor 
obtained by a shift and restriction (windowing) of $\widetilde{\bf P}_R$ onto the $n \times n \times n$ 
grid $\Omega_{n}$ in the unit cell $\Omega$, $\Omega_{n} \subset \Omega_{\widetilde{n}}$. 
A shift from the origin is specified according to the coordinates of the corresponding nuclei, $a_\nu$,
counted in the $h$-units.
Here $\widehat{\bf P}_{c}$ is the rank-$R_c$ ($R_c\leq M_0 R$, actually $R_c \approx R$) 
canonical tensor obtained from ${\bf P}_{c}$ by the rank optimization 
procedure (see \cite{VeBoKh:Ewald:14}, Remark 2.2).

For the tensor representation of the Newton potentials, ${\bf P}_{{c},\nu}$, we make use 
of the piecewise constant discretization on
the equidistant tensor grid, 
where, in general, the univariate grid size $n$ can be noticeably smaller 
than that used for the piecewise linear discretization applied to the Laplace operator.
Indeed, since we use the global basis 
functions for the Galerkin approximation to the eigenvalue problem, 
the grid-based representation of these basis functions can be different 
in the calculation of the kinetic and potential parts in the
Fock operator. The corresponding choice is the only controlled by the 
respective approximation error and by the numerical efficiency depending on 
the separation rank parameters.

The error $\varepsilon >0$ arising due to the separable approximation 
of the nuclear potential 
is controlled by the rank parameter $R_{P}= rank({\bf P}_{c})$. Now
letting $rank({\bf G}_m) = R_m$ implies that each matrix element is to be computed with 
linear complexity in $n$, $O(R_kR_m R_{P} \, n)$. 
The almost exponential convergence of the rank approximation in $R_{P}$ allows us 
the choice $R_{P}=O(|\log \varepsilon |)$.

Let us discuss the lattice structured systems.
Low-rank tensor decomposition of the Coulomb interaction defined by the large lattice sum is 
proposed in \cite{VeBoKh:Ewald:14}.
Given the potential sum $v_c$ in the scaled unit cell $\Omega=[-b/2,b/2]^3$,   of size 
$b\times b \times b$,  we consider an interaction potential in a symmetric box (supercell)  
$$
\Omega_L =B_1\times B_2 \times B_3,
$$ 
consisting of a union of $L_1 \times L_2 \times L_3$ unit cells $\Omega_{\bf k}$,
obtained from $\Omega$ by a shift proportional to
$ b$ in each variable, and specified by the lattice vector $b {\bf k}$, where
${\bf k}=(k_1,k_2,k_3)\in \mathbb{Z}^3$, $-(L_\ell-1)/2  \leq k_\ell\leq (L_\ell-1)/2 $, 
($\ell=1,2,3$), such that,
without loss of generality, we assume $L_\ell= 2 p_\ell +1, p_\ell\in \mathbb{N}$.
Hence, we have 
$$
B_\ell = \frac{b}{2}[- L_\ell  ,L_\ell ], \quad  \mbox{for} \quad 
L_\ell \in \mathbb{N},
$$ 
where $L_\ell=1$ corresponds to one-layer systems in the respective variable. 
Recall that  $b=n h$, 
where $h$ is the spacial grid size that is the same for all spacial variables.
To simplify the discussion, we often consider the case $L_\ell = L$.
We also introduce the accompanying domain $\widetilde{\Omega}_L$.

In the case of extended system in a box, further called case (B), the summation 
problem for the total potential $v_{c_L}$ is formulated in the box 
$\Omega_L= \bigcup_{k_1,k_2,k_3=-(L-1)/2}^{(L-1)/2} \Omega_{\bf k}$ as well as in the 
accompanying domain $\widetilde{\Omega}_L$. 
On each $\Omega_{\bf k}\subset \Omega_L$, the  potential sum of interest, 
$v_{\bf k}(x)=(v_{c_L})_{|\Omega_{\bf k}}$, 
is obtained by summation over all unit cells in $\Omega_L$,
\begin{equation}\label{eqn:EwaldSumE}
v_{\bf k}(x)=  \sum_{\nu=1}^{M_0} \sum\limits_{k_1,k_2,k_3=-(L-1)/2}^{(L-1)/2} 
\frac{Z_\nu}{\|{x} -a_\nu (k_1,k_2,k_3)\|}, \quad x\in \Omega_{\bf k}, 
\end{equation}
where $a_\nu (k_1,k_2,k_3)=a_\nu  + b {\bf k}$.
This calculation is performed at each of $L^3$ elementary cells 
$\Omega_{\bf k}\subset \Omega_L$, which is implemented by the tensor summation method
described in \cite{VeBoKh:Ewald:14}. The resultant lattice sum is
presented by the canonical tensor ${\bf P}_{c_L}$ with the rank 
$R_0 \leq M_0 R$,
\begin{equation}\label{eqn:EwaldTensorGl}
{\bf P}_{c_L}= \sum\limits_{\nu=1}^{M_0} Z_\nu \sum\limits_{q=1}^{R}
(\sum\limits_{k_1=0}^{L-1}{\cal W}_{\nu({k_1})} \widetilde{\bf p}^{(1)}_{q}) \otimes 
(\sum\limits_{k_2=0}^{L-1} {\cal W}_{\nu({k_2})} \widetilde{\bf p}^{(2)}_{q}) \otimes 
(\sum\limits_{k_3=0}^{L-1}{\cal W}_{\nu({k_3})} \widetilde{\bf p}^{(3)}_{q}).
\end{equation}
The numerical cost and storage size are bounded by $O(M_0 R L N_L )$, 
and $O(M_0 R N_L)$, respectively (see \cite{VeBoKh:Ewald:14}, Theorem 3.1),
where  $N_L= nL$. The lattice sum is also computed in  the accompanying domain 
$\widetilde{\Omega}_L$,
$\widetilde{\bf P}_{c_L}$, where the grid size is equal to $N_L +2 n_0$.

The lattice sum in (\ref{eqn:EwaldTensorGl}) converges only conditionally as $L\to \infty$.
This aspect will be addressed in Section (\ref{ssec:Complexity_EigPr}) following the approach 
introduced in \cite{VeBoKh:Ewald:14}.

\subsection{Nuclear potential operator in  a box}
\label{ssec:Core_Ham_gener}

First, consider the case of a single molecule in the unit cell.
Given the GTO-type basis set $\{{g}_k\}$, $k=1,...,m_0$, i.e. $N_b=m_0$, associated with the 
scaled unit cell and extended to the local bounding box $\widetilde{\Omega}$.
The corresponding rank-$1$ coefficients tensors 
${\bf G}_k={\bf g}_k^{(1)}\otimes{\bf g}_k^{(2)} \otimes{\bf g}_k^{(3)}$ 
representing their piecewise 
constant approximations $\{\overline{g}_k\}$ on the fine 
$\widetilde{n}\times \widetilde{n}\times \widetilde{n}$ grid. 
Then the entries of 
the respective Galerkin matrix  for the core potential operator $v_c$ in 
(\ref{eqn:ElectrostPot}), ${V}_c=\{{V}_{km}\}$,
 are represented (approximately) by the following tensor operations,
\begin{equation}  \label{eqn:nuc_pot}
 {V}_{km} \approx \int_{\widetilde{\Omega}_L} V_c(x) \overline{g}_k(x) \overline{g}_m(x) dx \approx 
 \langle {\bf G}_k \odot {\bf G}_m ,   {\bf P}_{c}\rangle =: {v}_{km} , 
\quad 1\leq k, m \leq m_0.
\end{equation}

In the case of lattice syastem in a box we define the basis set on a supercell $\Omega_{L}$ 
(and on $\widetilde{\Omega}_L$) 
by translation of the generating basis  by the lattice vector $\delta {\bf k}$, i.e., 
$\{g_{\mu}({x})\} \mapsto \{g_{\mu}({x+\delta {\bf k} })\}$,
where ${\bf k}=(k_1,k_2,k_3)$, $0 \leq k_\ell\leq L_\ell -1$, 
($\ell=1,2,3$), assuming zero extension 
of $\{g_{\mu}({x+\delta {\bf k} })\}$ beyond each local bounding box $\widetilde{\Omega}_{\bf k}$. 
In this construction the total number of basis functions is equal to $N_b=m_0 L_1 L_2 L_3$.
In practically interesting case of localized atomic orbital basis functions, 
the matrix  $V_{c_L}$ exhibits the special block 
sparsity pattern since the effective support of localized atomic orbitals 
associated with every unit cell $\Omega_{\bf k} \subset \widetilde{\Omega}_{\bf k}$ 
overlaps only fixed (small) number of neighboring cells.

In the following, the matrix block entries will be numbered by a pair of multi-indicies, 
$V_{c_L}=\{V_{{\bf k}{\bf m}}\}$, ${\bf k}=(k_1,k_2,k_3)$, where the $m_0\times m_0$ 
matrix block $V_{{\bf k}{\bf m}}$ is defined by
\begin{equation} \label{eqn:nuc_MatrSparsP}
V_{{\bf k}{\bf m}} = \langle {\bf G}_{\bf k} \odot {\bf G}_{\bf m} ,   {\bf P}_{c_L}\rangle, 
\quad - L/2 \leq k_\ell, m_\ell \leq L/2,\quad \ell=1,2,3,
\end{equation}
where the canonical tensors ${\bf G}_{\bf k}$ inherit the same block numbering. 

We denote by $L_0$ the number of 
cells measuring the overlap in basis functions in each spacial direction (overlap constant).

\begin{lemma}\label{lem:SparseCaseE}
Assume that  the number of overlapping cells in each spacial direction does 
not exceed $L_0$, 
then in case (B): (a) the number of non-zero blocks in each block row (column) of the symmetric 
Galerkin matrix $V_{c_L}$ does not exceed $(2 L_0 + 1)^3$, 
(b) the required storage is bounded by $m_0^2 [(L_0 + 1)L]^3$.
\end{lemma}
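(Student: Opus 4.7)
The plan is to exploit the rank-one structure of the tensors ${\bf G}_{\bf k}$ together with the strict spatial localization of the lattice-translated basis functions. The block $V_{{\bf k}{\bf m}}$ in (\ref{eqn:nuc_MatrSparsP}) is the Euclidean inner product of the Hadamard product ${\bf G}_{\bf k}\odot {\bf G}_{\bf m}$ with the core-potential tensor ${\bf P}_{c_L}$. Because each basis function is extended by zero outside its local bounding box $\widetilde{\Omega}_{\bf k}$, the tensor ${\bf G}_{\bf k}\odot {\bf G}_{\bf m}$ vanishes identically whenever the bounding boxes do not intersect, and then $V_{{\bf k}{\bf m}}=0$ regardless of ${\bf P}_{c_L}$.

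First I would make this precise by showing $\widetilde{\Omega}_{\bf k}\cap \widetilde{\Omega}_{\bf m}=\emptyset$ as soon as $|k_\ell - m_\ell|>L_0$ in some coordinate $\ell\in\{1,2,3\}$, using the hypothesis that $L_0$ is the maximal index shift for which the supports of two translated basis functions can meet. Counting the admissible ${\bf m}$ for fixed ${\bf k}$ as a Cartesian product over the three directions then yields at most $(2L_0+1)^3$ non-zero blocks per block row, and by the symmetry of $V_{c_L}$ the same estimate holds column-wise, which gives (a).

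For (b) I would combine (a) with the block symmetry relation $V_{{\bf k}{\bf m}}=V_{{\bf m}{\bf k}}^T$ inherited from the symmetry of $V_{c_L}$. Storing only those blocks with $m_\ell\geq k_\ell$ for each $\ell=1,2,3$ reduces the per-row count from $(2L_0+1)^3$ to $(L_0+1)^3$; multiplying by the total number of block rows $L_1 L_2 L_3 = L^3$ and by the block size $m_0^2$ then yields the storage bound $m_0^2[(L_0+1)L]^3$.

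I do not anticipate a genuine obstacle here: the argument is a counting exercise grounded in the locality of the GTO basis. The only point requiring care is fixing once and for all the convention on $L_0$ (one-sided versus two-sided extent of the support), so that both the sparsity pattern in (a) and the halved per-row count used in (b) are consistent and no non-zero block is inadvertently missed from the storage estimate.
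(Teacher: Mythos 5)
Your part (a) follows the paper's own route: the locality of the lattice-translated basis, formalized in the overlap property (\ref{eqn:Overlap_Basis}), annihilates ${\bf G}_{\bf k}\odot{\bf G}_{\bf m}$ (and hence the block $V_{{\bf k}{\bf m}}$ in (\ref{eqn:nuc_MatrSparsP})) as soon as the index difference exceeds the overlap width in one direction, and the Cartesian count of admissible ${\bf m}$ for fixed ${\bf k}$ gives at most $(2L_0+1)^3$ non-zero blocks per block row, with columns handled by symmetry. Whether you take the cut-off as $|k_\ell-m_\ell|>L_0$ or, as in (\ref{eqn:Overlap_Basis}), $|k_\ell-m_\ell|\geq L_0$ only changes the count to $(2L_0-1)^3$, still within the stated bound, so (a) is fine.

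For (b) your concrete bookkeeping has a gap. You propose to store only the blocks with $m_\ell\geq k_\ell$ for every $\ell$ and to recover the rest from $V_{{\bf k}{\bf m}}=V_{{\bf m}{\bf k}}^T$. But transposition replaces the whole difference ${\bf m}-{\bf k}$ by its negation, so a non-zero block whose difference has mixed signs (say $m_1>k_1$ but $m_2<k_2$) lies outside your stored octant and so does its transpose partner; such blocks are silently lost. Matrix symmetry alone only halves the count, i.e. roughly $(2L_0+1)^3/2$ stored blocks per block row, which exceeds $(L_0+1)^3$; the componentwise-octant reduction is legitimate only in the quasi-one-dimensional setting $(L,1,1)$ of the numerical experiments, where a single component of the difference is non-trivial. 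To be fair, the paper's own justification of (b) is equally terse --- it cites the per-row count together with the symmetry of $V_{c_L}$ --- so your overall route is the intended one; but to defend the constant $(L_0+1)^3$ in full 3D you would need additional block structure beyond the global symmetry (for instance, direction-wise relations coming from the factorized form of $v_{km}$ used in the paper's proof), or else you should state the weaker, factor-two storage reduction explicitly.
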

\begin{proof}
In case (B), the matrix representation $V_{c_L}=\{v_{km}\}\in \mathbb{R}^{N_b\times N_b}$ 
of the tensor as in (\ref{eqn:nuc_pot}) is obtained elementwise by the following 
tensor operations
 \begin{equation} \label{nuc_potMatrTot}
 \overline{v}_{km}=  \int_{\mathbb{R}^3} v_c(x) \overline{g}_k(x) \overline{g}_m(x) dx 
\approx  \langle {\bf G}_k \odot {\bf G}_m ,   {\bf P}_{c_L}\rangle =: v_{km}, 
\quad 1\leq k, m \leq N_b,
\end{equation}
where $\{\overline{g}_k\}$ denotes the piecewise constant representations to 
the respective Galerkin basis functions.
This leads to the final expression
\[
\begin{split}
 {v}_{km} & = \sum\limits_{\nu=1}^{M_0} Z_\nu \sum\limits_{q=1}^{R_{\cal N}}
\langle {\bf G}_k \odot {\bf G}_m , 
(\sum\limits_{k_1=0}^{L_1-1} {\cal W}_{\nu({k_1})} \widetilde{\bf p}^{(1)}_{q}) \otimes 
(\sum\limits_{k_2=0}^{L_2-1} {\cal W}_{\nu({k_2})} \widetilde{\bf p}^{(2)}_{q}) \otimes 
(\sum\limits_{k_3=0}^{L_3-1} {\cal W}_{\nu({k_3})} \widetilde{\bf p}^{(3)}_{q})  \rangle \\
 &=
\sum\limits_{\nu=1}^{M_0} Z_\nu \sum\limits_{q=1}^{R_{\cal N}}
\prod\limits_{\ell=1}^3
\langle {\bf g}_k^{(\ell)} \odot { \bf g}_m^{(\ell)},
\sum\limits_{k_\ell=1}^{L_\ell} {\cal W}_{\nu({k_\ell})} \widetilde{\bf p}^{(\ell)}_{q} \rangle.  
\end{split}
\]
Taking into account the block representation (\ref{eqn:nuc_MatrSparsP}) and the overlapping property
\begin{equation} \label{eqn:Overlap_Basis}
 {\bf G}_{\bf k} \odot {\bf G}_{\bf m}=0 \quad \mbox{if} \quad | k_\ell - m_\ell| \geq L_0, 
\end{equation}
we analyze the block sparsity pattern in the Galerkin matrix $V_{c_L}$.
Given $3 M_0 R_{\cal N} $ vectors 
$\sum\limits_{k_\ell=1}^{L_\ell} {\cal W}_{\nu({k_\ell})} \widetilde{\bf p}^{(\ell)}_{q}\in \mathbb{R}^{N_L}$, 
where $N_L$ denotes the total number of grid points in $\Omega_L$ in each space variable.
Now the numerical cost to compute ${v}_{km}$ for every fixed index $(k,m)$ is 
estimated by $O(M_0 R_{\cal N} N_L)$ 
indicating linear scaling in the large grid parameter $N_L$ (but not cubic). 

Fixed the row index in $(k,m)$, then (b) follows from the bound on the total number of 
cells $\Omega_{\bf k}$ in the effective integration 
domain in (\ref{nuc_potMatrTot}), that is $(2 L_0 + 1)^3$,  
and the symmetry of $V_{c_L}$.
\end{proof}

Figure \ref{fig:3DCorePerScellErr} illustrates the sparsity pattern of the 
nuclear potentail operator $V_{c_L}$ in the matrix $H$, for $(L, 1, 1)$ lattice 
in a supercell with $L=32$ and $m_0=4$,
corresponding to the overlapping parameter $L_0=3$. One can observe the nearly-boundary effects due to 
the non-equalized  contributions from the left and from the right (supercell in a box). 


Figure \ref{fig:3DCorePerScellErr} shows the difference between matrices $V_{c_L}$ in periodic 
(see \S\ref{ssec:Core_Ham_period} for more details) and non-periodic cases. 
The relative norm of the difference is vanishing  if $L\to \infty$.

\begin{figure}[htbp]
\centering
\includegraphics[width=5.0cm]{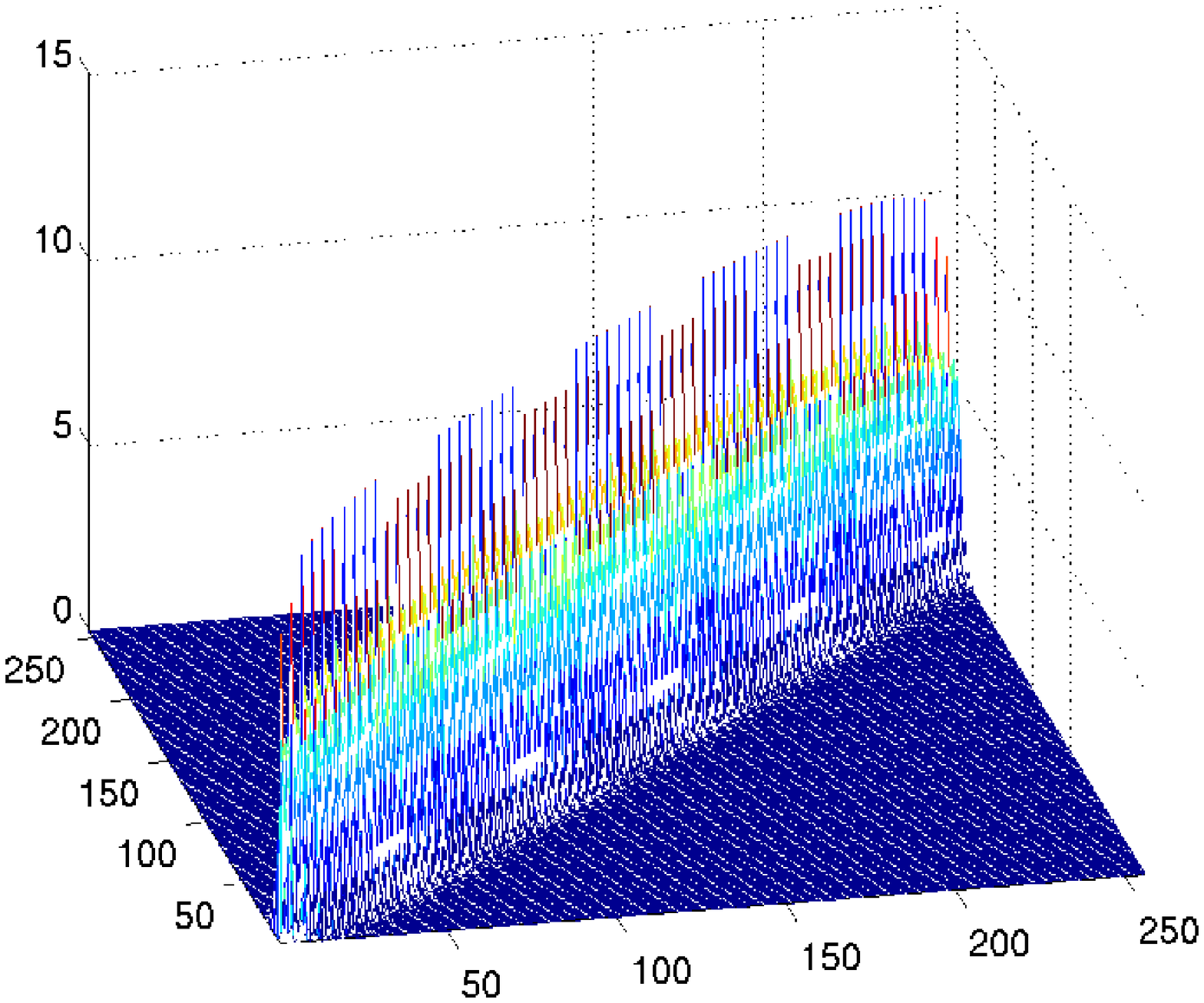}
\includegraphics[width=5.0cm]{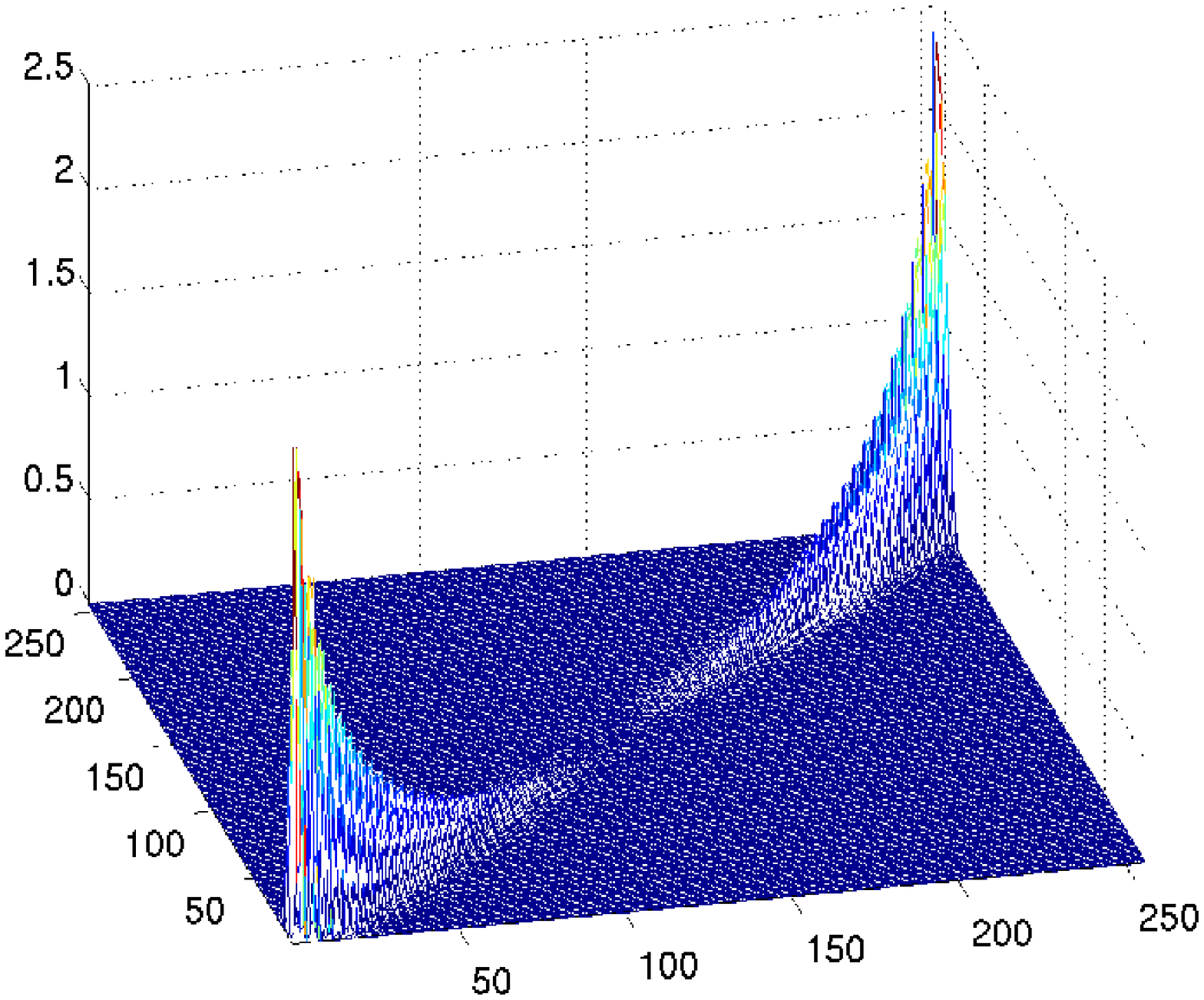} 
\includegraphics[width=5.0cm]{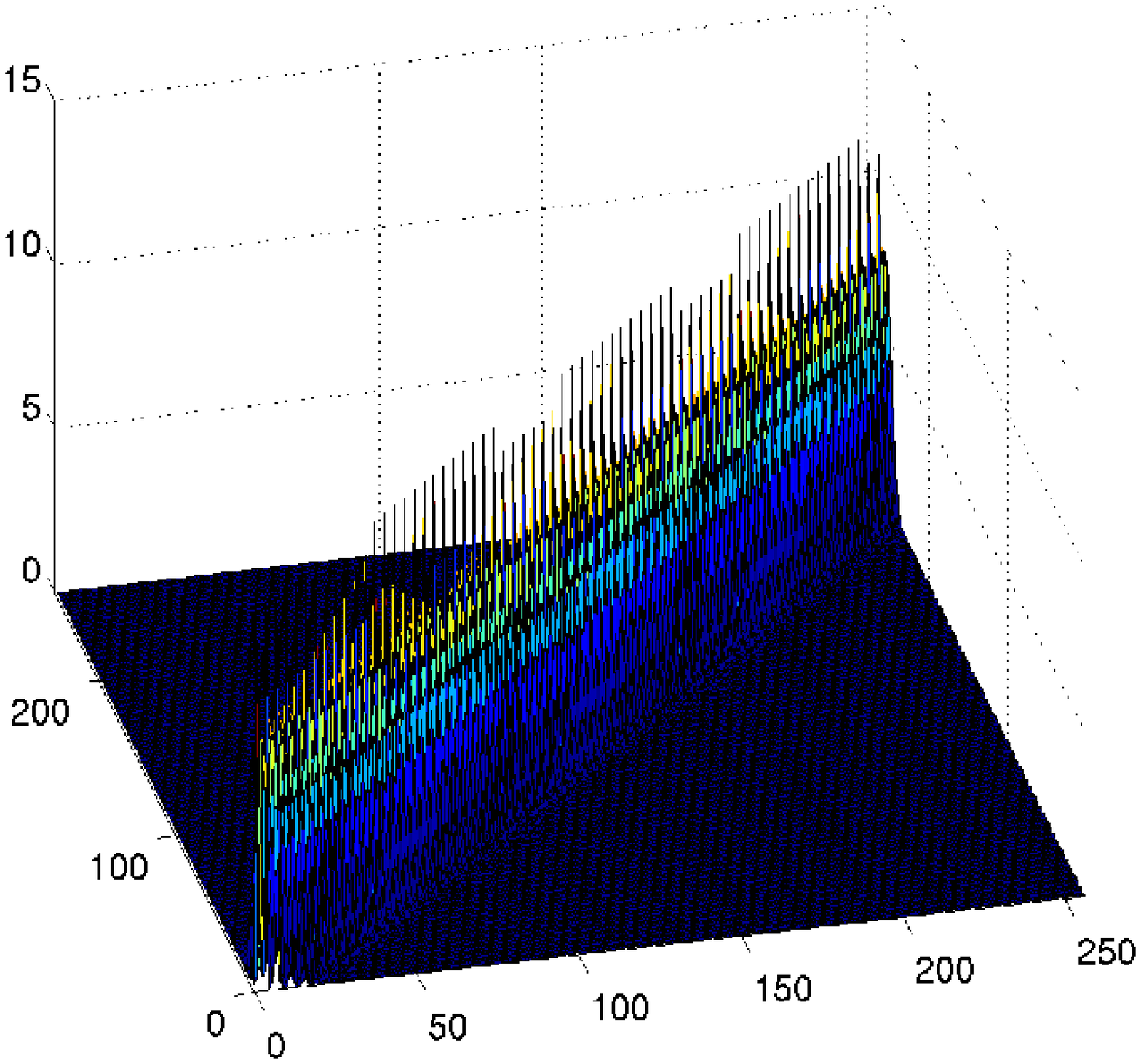}
\caption{\small  
Matrix $V_{c_L}$ in a supercell for $L_0=3, L=32$ (left). 
Difference between  matrices $V_{c_L}$ in periodic 
and single-box cases (middle). Block-sparsity in the matrix $V_{c_L}$ in periodic case (right).}
\label{fig:3DCorePerScellErr}  
\end{figure}

Notice that the quantized approximation of canonical vectors involved in ${\bf G}_k$ 
and ${\bf P}_{c_L}$ reduces
this cost to the logarithmic scale, $O(M_0 R_{\cal N} \log N_L)$, 
that is important in the case of large $L$ in view of $N_L=O(L)$.

The block $L_0$-diagonal structure of the matrix $V_{c_L}=\{V_{{\bf k}{\bf m}}\}$,
${\bf k}\in \mathbb{Z}^3$ ($- L/2 \leq {k}_\ell L/2$)
described by Lemma \ref{lem:SparseCaseE} allows the essential saving in the storage
costs. 

However, the polynomial complexity scaling in $L$ leads to severe limitations on the number of
unit cells. These limitations can be relax if we look more precisely on the
defect between matrix ${V}_{c_L}$ and its block-circulant version corresponding to
the periodic boundary conditions (see \S\ref{ssec:Core_Ham_period}). 
This defect can be split into two components with respect to their local and non-local features:
\begin{enumerate}
 \item [(A)] Non-local effect due to the asymmetry in the interaction potential sum on the lattice in a box.
\item [(B)] The near boundary (local) defect that effects only those blocks in 
$V_{c_L}=\{V_{{\bf k}{\bf m}}\}$ lying in the $L_0$-width of $\partial\Omega_L$,
\[
 L_0+1 -(L-1)/2\leq k_\ell, m_\ell \leq (L-1)/2-1 -L_0.
\] 
\end{enumerate}

Item (A) is related to a 
slight  modification of the core potential to the shift invariant Toeplitz-type form 
$V_{{\bf k}{\bf m}} = V_{|{\bf k}-{\bf m}|}$
by replication of the central block corresponding to $k=0$,
as considered in the next section. In this way the overlap condition 
(\ref{eqn:Overlap_Basis}) for the tensor ${\bf G}_{\bf k}$ 
will impose the block sparsity.

The boundary effect in item (B) becomes relatively small for large number of cells
so that the block-circulant part of the matrix $V_{c_L}$ is getting dominating
as $L\to \infty$. 

The full diagonalization for above mentioned matrices is impossible.
However the efficient storage and fast matrix-times-vector algorithms 
can be applied in the framework of iterative methods for calculation of a small
subset of eigenvalues.

\subsection{Discrete Laplacian  and the mass matrix}\label{ssec:Lap_Op}

The Laplace operator part included in eigenvalue problem for a single molecule 
is posed in the unit cell $ \Omega=[-b/2,b/2]^3 \in \mathbb{R}^3 $,
subject to the homogeneous Dirichlet boundary conditions on $\partial \Omega$.
In periodic case they should be substituted by the periodic boundary conditions.
For given discretization parameter $\overline{n} \in \mathbb{N}$,  we use
the equidistant $\overline{n}\times \overline{n} \times \overline{n}$ tensor 
grid $\omega_{{\bf 3},\overline{n}}=\{x_{\bf i}\} $, 
${\bf i} \in {\cal I} :=\{1,...,\overline{n}\}^3 $,
with the mesh-size $h=2b/(\overline{n} + 1)$,
which might be different from the grid  $\omega_{{\bf 3},n} $ introduced
for representation of the interaction potential (usually, $n\leq \overline{n}$).

Define a set of piecewise linear basis functions 
$\overline{g}_k := {\bf I}_1 g_k $, $k=1,...,N_b$, 
by linear tensor-product interpolation via the set of product hat functions,
 $\{\xi_{\bf i}\}=  \xi_{i_1} (x_1) \xi_{i_2} (x_2)\xi_{i_3} (x_3)$, 
${\bf i} \in {\cal I}$,
associated with the respective grid-cells in $\omega_{{\bf 3},N}$.
Here the linear interpolant ${\bf I}_1= {I}_1\times {I}_1 \times {I}_1$ is 
a product of 1D interpolation operators, $\overline{g}_k^{(\ell)}= {I}_1 {g}_k^{(\ell)}$, $\ell=1,...,3$,
where ${I}_1:C^0([-b,b])\to W_h:=span\{\xi_i\}_{i=1}^{\overline{n}}$ 
is defined over the set of piecewise linear basis functions by 
$$
({I}_1 \, w)(x_\ell):=\sum_{i=1}^{\tilde{n}} w(x_{i_\ell})\xi_{i}(x_\ell), 
\quad  x_{\bf i} \in \omega_{{\bf 3},\tilde{n}}.
$$

With these definitions, 
the rank-$3$ tensor representation of the standard FEM Galerkin stiffness matrix for the Laplacian,
$A_3$, in the tensor basis 
$\{\xi_i(x_1) \xi_j (x_2)\xi_k (x_3) \} $, $i,j,k = 1,\ldots \overline{n}$,
is given by
$$
 A_3 := A^{(1)} \otimes S^{(2)} \otimes S^{(3)} + S^{(1)} \otimes A^{(2)} \otimes S^{(3)}
+ S^{(1)} \otimes S^{(2)} \otimes A^{(3)}\in 
\mathbb{R}^{\overline{n}^{\otimes 3}\times \overline{n}^{\otimes 3}}, 
$$
where the 1D stiffness and mass matrices 
$A^{(\ell)}, S^{(\ell)} \in \mathbb{R}^{\overline{n}\times \overline{n}}$,
$\ell=1,\,2,\,3$, are represented by
\[
 A^{(\ell)} := \{ \langle \frac{d}{d x_\ell} \xi_i(x_\ell) , \frac{d}{d x_\ell} \xi_j(x_\ell) 
\rangle \}^{\overline{n}}_{i,j=1} = \frac{1}{h} \mbox{tridiag} \{-1,2,-1\},
\]
\[ 
 S^{(\ell)}=\{ \langle \xi_i ,\xi_j\rangle \}^{\overline{n}}_{i,j=1} = \frac{h}{6}\;
 \mbox{tridiag} \{1,4,1\},
\]
respectively.


This leads to the separable grid-based 
approximation of the initial basis functions $g_k(x)$,
\begin{equation}\label{eq. Gaus pwl}
g_k (x) \approx \overline{g}_k (x)=\prod^3_{\ell=1} 
\overline{g}_k^{(\ell)} (x_{\ell})=\prod^3_{\ell=1} 
\sum\limits^{\overline{n}}_{i=1} g_{k}^{(\ell)}(x_{i_\ell}) \xi_i (x_{\ell}),
\end{equation}
where the rank-$1$ coefficients tensor ${\bf G}_k$ is given by 
${\bf G}_k= {\bf g}_k^{(1)} \otimes {\bf g}_k^{(2)} \otimes {\bf g}_k^{(3)}$, 
with the canonical vectors
${\bf g}_k^{(\ell)}=\{g_{k_i}^{(\ell)}\}\equiv \{g_{k}^{(\ell)}(x_{i_\ell})\}$. 
Let us agglomerate the rank-$1$ tensors ${\bf G}_k\in \mathbb{R}^{{\overline{n}}^{\otimes 3}}$, 
($k=1,...,N_b$) in a tensor-valued matrix $G\in  \mathbb{R}^{N^{\otimes 3}\times N_b}$, 
the Galerkin matrix in the basis set ${\bf G}_k $ can be written in a matrix form
\[
 A_G= G^T A_3 G\in  \mathbb{R}^{N_b \times N_b},
\]
corresponding to the standard matrix-matrix transform under the change of basis.
The matrix entries in $A_G=\{a_{k m}\}$ can be represented by
\[
 a_{k m}= \langle A_3 {\bf G}_k, {\bf G}_m\rangle, \quad  k,m = 1,...,N_b.
\]
Likewise, for the entries of the stiffness matrix we have 
$s_{k m}= \langle {\bf G}_k, {\bf G}_m\rangle$.

It is easily seen that in the periodic case both matrices, $A_G$ and $S$, take the
multilevel block circulant structure.

\section{Linearized  spectral problem by FFT-diagonalization}
\label{sec:Core_Ham_period_FFT}

There are two possibilities for mathematical modeling of the $L$-periodic molecular 
systems, composed, of $(L, L, L)$ elementary unit cells.  
In  the first approach, 
the system is supposed to contain an infinite set of equivalent atoms that 
map identically into itself under any translation by $L$ units in each spacial direction. 
The other model is based on the ring-type periodic structures 
consisting of $L$ identical units in each spacial direction, where every
unit cell of the periodic compound will be mapped to itself by applying a rotational 
transform  from the corresponding rotational group symmetry \cite{SzOst:1996}.

The main difference between these two concepts is in the treatment of the lattice sum 
of Coulomb interactions, thought, in the limit of $L\to \infty$ both models 
approach each other.
In this paper we mainly follow the first approach with the particular 
focus on the asymptotic complexity optimization  
for large lattice parameter $L$. The second concept is useful
for understanding the block structure of the Galerkin matrices for 
Laplacian and the identity operators.

\subsection{Block circulant structure of core Hamiltonian in periodic case}
\label{ssec:Core_Ham_period}

Inthis section we consider the periodic case, further called case (P), and
derive the more refined sparsity pattern of the matrix $V_{c_L}$ 
using the $d$-level ($d=1,2,3$) tensor structure  in this matrix.
The matrix block entries are numbered by a pair of multi-indices, 
$V_{c_L}=\{V_{{\bf k}{\bf m}}\}$, ${\bf k}=(k_1,k_2,k_3)$, where the $m_0\times m_0$ 
matrix block $V_{{\bf k}{\bf m}}$ is defined by (\ref{eqn:nuc_MatrSparsP}).
Figure \ref{fig:3DPeriodStruct}  illustrates an example of 3D lattice-type structure of 
size $(4, 4, 2)$. 

\begin{figure}[htbp]
\centering
\includegraphics[width=7.0cm]{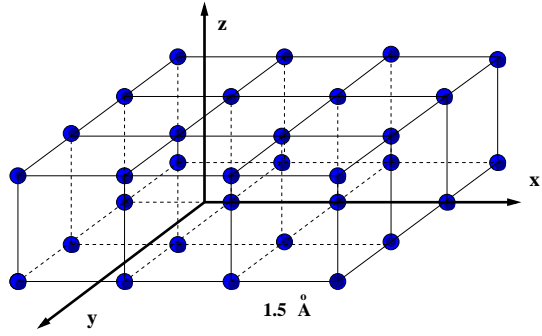}\quad \quad
\caption{\small  Example of the 3D lattice-type structure of size $(4, 4, 2)$. 
}
\label{fig:3DPeriodStruct}  
\end{figure}

Following \cite{VeBoKh:Ewald:14} we introduce the periodic cell 
${\cal R}=  \mathbb{Z}^d$, $d=1,2,3$ for the $\bf k$ index, and consider a 3D $B$-periodic 
supercell $\Omega_L= B\times B\times B$, with $B= \frac{b}{2}[-L,L]$. 
The total electrostatic potential in the supercell $\Omega_L$ is obtained by, first,  
the lattice summation of the Coulomb potentials over  $\Omega_L$ for (rather large) $L$, 
but restricted to the central unit cell $\Omega_0$, and then by replication of the resultant
function to the whole supercell. 
Hence, that the total potential sum $v_{c_L}(x)$ is designated at each elementary 
unit-cell in $\Omega_L$ by the same value (${\bf k}$-translation invariant).
The effect of the conditional convergence of
the lattice summation can be treated by using the  extrapolation 
to the limit (regularization) on
a sequence of different lattice parameters $L$ as described in \cite{VeBoKh:Ewald:14}.

The electrostatic potential in any of $B$-periods can be obtained by copying the 
respective data from $\Omega_L$. The basis set in $\Omega_L$ is constructed by replication 
from the the master unit cell $\Omega_0$ over the whole periodic lattice.

Consider the case $d=3$ in more detail.  
Recall that the reference value $v_{c_L}(x)$ will be computed at the central cell
$\Omega_0$, indexed by $(0,0,0)$, by summation over all contributions from $L^3$ 
elementary sub-cells in $\Omega_L$. For technical reasons here and in the following we
vary the summation index in $k_\ell=0,..., L-1$ and obtain
\begin{equation}\label{eqn:EwaldSumP}
 v_0(x)=  \sum_{\nu=1}^{M_0} \sum\limits_{k_1,k_2,k_3=0}^{L-1}
 \frac{Z_\nu}{\|{x} -a_\nu (k_1,k_2,k_3)\|}, 
\quad x\in \Omega_0.
\end{equation}
The local lattice sum on the index set $n\times n \times n$ corresponding to 
$\Omega_0$, is represented by
\[
 {\bf P}_{\Omega_0} 
= \sum_{\nu=1}^{M_0} Z_\nu \sum\limits_{k_1,k_2,k_3=0}^{L-1} \sum\limits_{q=1}^{R_{\cal N}}
{\cal W}_{\nu({\bf k})} \widetilde{\bf p}^{(1)}_{q} 
\otimes \widetilde{\bf p}^{(2)}_{q} \otimes \widetilde{\bf p}^{(3)}_{q}
\in \mathbb{R}^{n\times n \times n},
\]
for the corresponding local projected tensor of small size $n\times n \times n$.
Here  the $\Omega$-windowing operator,
$
{\cal W}_{\nu({\bf k})}={\cal W}_{\nu(k_1)}^{(1)}\otimes {\cal W}_{\nu(k_2)}^{(2)}
\otimes {\cal W}_{\nu(k_3)}^{(3)},
$
restricts onto the small $n\times n \times n$ unit cell by shifting by the lattice vector 
${\bf k}=(k_1,k_2,k_3)$. This reduces both the computational and storage costs by factor $L$.

In the 3D case, we set $q=3$ in the notation for multilevel BC matrix.
Similar to the case of one-level BC matrices, we notice that a matrix 
$A\in {\cal BC} (3,{\bf L},m)$ of size $|{\bf L}| m \times  |{\bf L}| m$ is completely 
defined  by a $3$-rd order coefficients tensor ${\bf A}=[A_{k_1 k_2 k_3}]$ of size 
$L_1 \times L_2 \times L_3 $,
($k_\ell=0,...,L_\ell-1$, $\ell=1,2,3$), with $m\times m$ block-matrix entries,  
obtained by folding of the generating first column vector in $A$. 

\begin{lemma}\label{lem:SparseCaseP}
Assume that in case (P) the number of overlapping unit cells (in the sense of supports of
basis functions) in each spatial direction does not exceed $L_0$. 
Then the Galerkin matrix $V_{c_L}$ exhibits the symmetric, three-level block circulant 
Kronecker tensor-product form. i.e. $V_{c_L} \in {\cal BC} (3,{\bf L},m_0)$, 
(${\bf L}=(L_1,L_2,L_3)$)
\begin{equation}\label{eqn:BC-Core}
V_{c_L}= \sum\limits_{k_1=0}^{L_1-1} \sum\limits_{k_2=0}^{L_2-1} \sum\limits_{k_3=0}^{L_3-1}
\pi_{L_1}^{k_1}\otimes \pi_{L_2}^{k_2}\otimes \pi_{L_3}^{k_3}\otimes A_{k_1 k_2 k_3}, 
\quad A_{k_1 k_2 k_3}\in \mathbb{R}^{m_0\times m_0},
\end{equation}
where the number non-zero matrix blocks $A_{k_1 k_2 k_3}$ does not exceed $(L_0+1)^3$.

The required storage is bounded by $m_0^2 [(L_0 + 1)]^3$ independent of $L$.
The set of non-zero generating matrix blocks $\{ A_{k_1 k_2 k_3}\}$ can be calculated 
in $O(m_0^2 [(L_0 + 1)]^3 n)$ operations.

Furthermore, assume that the QTT ranks of the assembled canonical vectors do not exceed $r_0$. 
Then the numerical cost can be reduced to the logarithmic scale, $O( m_0^2 [(L_0 + 1)]^3\log n)$.
\end{lemma}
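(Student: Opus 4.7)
The plan is to establish in turn (i) the three-level block-circulant Kronecker structure, (ii) the block sparsity count, and (iii) the two complexity bounds, each building on ingredients already assembled in the paper.

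For the structural claim (\ref{eqn:BC-Core}), the decisive observation is that in case (P) the total potential $v_{c_L}$ is, by its construction in (\ref{eqn:EwaldSumP}), first computed on the reference cell $\Omega_0$ and then replicated to every unit cell of $\Omega_L$ by the lattice shifts $b\mathbf{k}$, so it is invariant under such shifts interpreted cyclically on the supercell. The Galerkin basis is itself obtained by the same lattice translations of the $m_0$ master functions. I would therefore argue that the block $V_{\mathbf{k}\mathbf{m}}$ defined in (\ref{eqn:nuc_MatrSparsP}) depends only on the cyclic difference $\mathbf{k}-\mathbf{m}\bmod\mathbf{L}$, which by Definition \ref{def:Bcirc} identifies $V_{c_L}$ as an element of $\mathcal{BC}(3,\mathbf{L},m_0)$. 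The explicit sum in (\ref{eqn:BC-Core}) is then obtained by unfolding this one-column parametrization into shifts $\pi_{L_1}^{k_1}\otimes \pi_{L_2}^{k_2}\otimes \pi_{L_3}^{k_3}$ by the same algebraic identity used at the beginning of the proof of Lemma \ref{lem:DiagMLCirc}; symmetry of the bilinear form together with translation invariance yields the symmetric variant, i.e. $A_{\mathbf{k}}=A_{-\mathbf{k}\bmod\mathbf{L}}^T$.

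For the sparsity bound I would invoke the overlap condition (\ref{eqn:Overlap_Basis}): $\mathbf{G}_{\mathbf{k}}\odot \mathbf{G}_{\mathbf{m}}=0$ unless $|k_\ell-m_\ell|<L_0$ in every spatial direction (understood cyclically in case (P)). Thus the only candidate non-zero generating blocks $A_{k_1 k_2 k_3}$ are those whose cyclic index satisfies $\min(k_\ell,L_\ell-k_\ell)\leq L_0$; folding out the symmetry $A_{\mathbf{k}}=A_{-\mathbf{k}}^T$ identifies pairs of such blocks and leaves at most $(L_0+1)^3$ independent non-zero blocks, giving the storage bound $m_0^2(L_0+1)^3$ which is independent of $L$ as required.

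For the $O(m_0^2(L_0+1)^3\, n)$ cost of assembling the generating blocks, I would combine the low-rank canonical form of $\mathbf{P}_{\Omega_0}$ (with $R_{\mathcal N}$ terms, as in Section \ref{ssec:Core_Ham_period}) with the rank-$1$ structure of every $\mathbf{G}_{\mathbf{k}}=\mathbf{g}_{\mathbf{k}}^{(1)}\otimes \mathbf{g}_{\mathbf{k}}^{(2)}\otimes \mathbf{g}_{\mathbf{k}}^{(3)}$ so that each required inner product $\langle \mathbf{G}_{\mathbf{k}}\odot\mathbf{G}_{\mathbf{m}},\mathbf{P}_{\Omega_0}\rangle$ collapses to a sum of at most $R_{\mathcal N}$ triple products of 1D Hadamard-scalar products, each of cost $O(n)$; multiplying by the $O(m_0^2(L_0+1)^3)$ entries to be filled and absorbing $R_{\mathcal N}$ into the constant yields the stated bound. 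To replace $n$ by $\log n$, I would then represent all of the relevant 1D canonical factors in QTT form of rank at most $r_0$, so that each scalar product of two QTT-vectors costs $O(r_0^2\log n)$ by the standard tensor-train contraction, giving the final $O(m_0^2(L_0+1)^3\log n)$ estimate. The main obstacle I expect is the bookkeeping for the cyclic-versus-linear overlap count — one has to verify carefully that the periodized overlap condition filters \emph{exactly} the shifts with $\min(k_\ell,L_\ell-k_\ell)\leq L_0$ and that applying $A_{\mathbf{k}}=A_{-\mathbf{k}}^T$ produces $(L_0+1)^3$ rather than $(2L_0+1)^3$; the remaining steps are a direct assembly of tools already established.
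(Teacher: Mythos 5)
Your proposal follows the same route as the paper: translation invariance of both $\mathbf{P}_{c_L}$ and the lattice-replicated basis gives the shift-invariance $V_{\mathbf{k}\mathbf{m}}=V_{|\mathbf{k}-\mathbf{m}|}$, the periodicity then upgrades the resulting three-level block Toeplitz structure to block circulant, the Kronecker form \eqref{eqn:BC-Core} follows by the same slice/fiber-wise unfolding used in Lemma \ref{lem:DiagMLCirc}, and the overlap condition \eqref{eqn:Overlap_Basis} with the canonical/QTT representation of $\mathbf{P}_{\Omega_0}$ and $\mathbf{G}_{\mathbf{k}}$ gives the sparsity and cost bounds. This is essentially the paper's argument.

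One remark worth recording, precisely at the spot you flag as your ``main obstacle.'' You propose to obtain $(L_0+1)^3$ from $(2L_0+1)^3$ by factoring out the symmetry $A_{\mathbf{k}}=A_{\mathbf{L}-\mathbf{k}}^T$ coordinate-by-coordinate (giving $L_0+1$ out of $2L_0+1$ in each direction). For a \emph{three-level} circulant this is not quite legitimate: the symmetry is the single involution $\mathbf{j}\mapsto\mathbf{L}-\mathbf{j}$ acting on the whole index tuple, not three independent involutions $j_\ell\mapsto L_\ell-j_\ell$. Folding out a single $\mathbb{Z}_2$ action only halves the count, which does not take $(2L_0+1)^3$ down to $(L_0+1)^3$. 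The paper's own proof does not resolve this either --- it simply asserts the $(L_0+1)^3$ bound directly from the overlap condition without invoking symmetry --- so neither version fully justifies the stated constant, and the honest bound one gets from \eqref{eqn:Overlap_Basis} and symmetry is $O\bigl((2L_0+1)^3\bigr)$ non-zero generating blocks, of which about half are independent. Since only the $L$-independence matters for the lemma's conclusions (storage and assembly cost bounded independently of $L$, up to an $L_0$-dependent constant), this does not affect the substance, but your explanation of how $(L_0+1)^3$ arises should not be stated as a derivation.
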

\begin{proof}
First, we notice that the shift invariance property in the matrix 
$V_{c_L}=\{V_{{\bf k}{\bf m}}\}$  is a consequence of the
translation invariance in the canonical tensor ${\bf P}_{c_L}$ (periodic case), 
and in the basis-tensor ${\bf G}_{\bf k}$ (by construction),
\begin{equation} \label{eqn:Basis_shift}
{\bf G}_{\bf k m}:= {\bf G}_{\bf k} \odot {\bf G}_{\bf m}= {\bf G}_{|{\bf k} -{\bf m}|} \quad 
\mbox{for} \quad | k_\ell|, |m_\ell| \leq L-1, 
\end{equation}
so that we have
\begin{equation} \label{nuc_BCirculantP}
V_{{\bf k}{\bf m}} = V_{|{\bf k}-{\bf m}|}, \quad 0\leq k_\ell, m_\ell \leq L-1.
\end{equation}
This ensures the perfect three-level block-Toeplitz structure of $V_{c_L}$ 
(compare with the case of a box). 
Now the block circulant pattern in ${\cal BC} (3,{\bf L},m_0)$ is imposed by the 
periodicity of a lattice-structured basis set.

To prove the complexity bounds we observe that a matrix 
$V_{c_L} \in {\cal BC} (3,{\bf L},m_0)$
can be represented in the Kronecker tensor product form (\ref{eqn:BC-Core}),
obtained by an easy generalization of (\ref{eqn:bcircPol}). In fact, we apply 
(\ref{eqn:bcircPol}) by successive slice-wise and fiber-wise splitting
to obtain
\[
\begin{split}
 V_{c_L}
& = \sum\limits_{k_1=0}^{L_1-1}\pi_{L_1}^{k_1}\otimes {\bf A}_{k_1} \\
&= \sum\limits_{k_1=0}^{L_1-1}\pi_{L_1}^{k_1}\otimes 
\left( \sum\limits_{n_2=0}^{L_2-1} \pi_{L_2}^{k_2}\otimes {\bf A}_{k_1 k_2} \right)\\
&=\sum\limits_{k_1=0}^{L_1-1}\pi_{L_1}^{k_1}\otimes 
\left( \sum\limits_{k_2=0}^{L_2-1} \pi_{L_2}^{k_2}\otimes 
\left(\sum\limits_{k_3=0}^{L_3-1} \pi_{L_3}^{k_3}\otimes {A}_{k_1 k_2 k_3} \right) \right),
\end{split}
\]
where ${\bf A}_{k_1}\in \mathbb{R}^{L_2\times L_3 \times m_0\times m_0}$, 
${\bf A}_{k_1 k_2} \in  \mathbb{R}^{L_3 \times m_0\times m_0}$, and 
$A_{k_1 k_2 k_3}\in \mathbb{R}^{m_0\times m_0}$.
Now the overlapping assumption ensures that the number of non-zero matrix blocks 
$A_{k_1 k_2 k_3}$ does exceed $(L_0+1)^3$.

Furthermore, the symmetric mass matrix, $S_{c_L}=\{{s}_{\mu \nu} \}\in \mathbb{R}^{N_b\times N_b}$, 
for the Galerkin  representation of the identity operator reads as follows, 
\begin{equation*}  \label{Ident_pot}
 {s}_{\mu \nu}=  
 \langle {\bf G}_\mu , {\bf G}_\nu \rangle 
 =\langle S^{(1)}{\bf g}_\mu^{(1)},{\bf g}_\nu^{(1)} \rangle 
 \langle S^{(2)} {\bf g}_\mu^{(2)},{\bf g}_\nu^{(2)} \rangle 
 \langle S^{(3)} {\bf g}_\mu^{(3)},{\bf g}_\nu^{(3)} \rangle, 
\quad 1\leq \mu, \nu \leq N_b,
\end{equation*}
 where $N_b=m_0 L^3$.
It can be seen that in the periodic case the block structure in the basis-tensor  
${\bf G}_{\bf k} $ imposes
 the three-level block circulant structure in the mass matrix $S_{c_L}$ 
\begin{equation}\label{eqn:BC-mass}
S_{c_L}= \sum\limits_{k_1=0}^{L_1-1} \sum\limits_{k_2=0}^{L_2-1} \sum\limits_{k_3=0}^{L_3-1}
\pi_{L_1}^{k_1}\otimes \pi_{L_2}^{k_2}\otimes \pi_{L_3}^{k_3}\otimes S_{k_1 k_2 k_3}, 
\quad S_{k_1 k_2 k_3}\in \mathbb{R}^{m_0\times m_0}.
\end{equation}
By the previous arguments we conclude that 
$S_{k_1 k_2 k_3}=S^{(1)}_{k_1} S^{(2)}_{k_2} S^{(3)}_{k_3}$ implying the rank-$1$
separable representation in (\ref{eqn:BC-mass}).

Likewise, it is easy to see that the stiffness matrix representing the (local) Laplace 
operator in the periodic setting has the similar block circulant structure, 
\begin{equation}\label{eqn:BC-Laplace}
\Delta_{c_L}= \sum\limits_{k_1=0}^{L_1-1} \sum\limits_{k_2=0}^{L_2-1} \sum\limits_{k_3=0}^{L_3-1}
\pi_{L_1}^{k_1}\otimes \pi_{L_2}^{k_2}\otimes \pi_{L_3}^{k_3}\otimes B_{k_1 k_2 k_3}, 
\quad B_{k_1 k_2 k_3}\in \mathbb{R}^{m_0\times m_0},
\end{equation}
where the number non-zero matrix blocks $B_{k_1 k_2 k_3}$ does not exceed $(L_0+1)^3$.
In this case the matrix block $B_{k_1 k_2 k_3}$ admits a rank-$3$ product factorization.

This proves the sparsity pattern of our tensor approximation to $H$.
\end{proof}

In the Hartree-Fock calculations for lattice structured systems we deal with 
the multilevel, symmetric block circulant/Toeplitz matrices, where
the first-level blocks, $A_0,...,A_{L_1-1}$, may have further block structures.
In particular, Lemma \ref{lem:SparseCaseP} shows that 
the Galerkin approximation of the 3D Hartree-Fock core Hamiltonian in periodic setting 
leads to the symmetric, three-level block circulant matrix. 

Figure \ref{fig:3DCoreHamPer} represents the block-sparsity in the core Hamiltonian 
matrix  in a box for $L=8$ (left), and  the rotated matrix profile (right).

\begin{figure}[htbp]
\centering
\includegraphics[width=6.0cm]{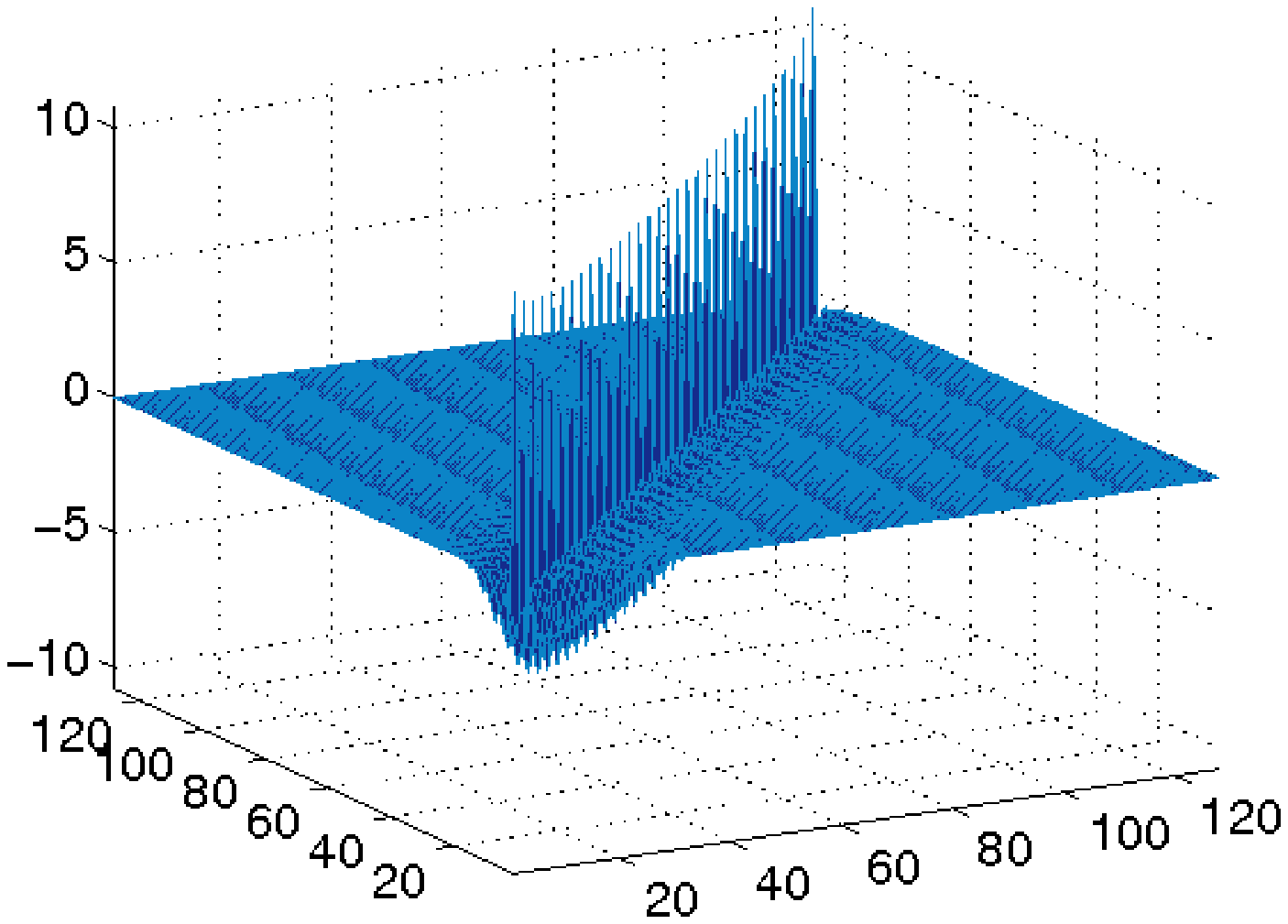}\quad \quad
\includegraphics[width=6.0cm]{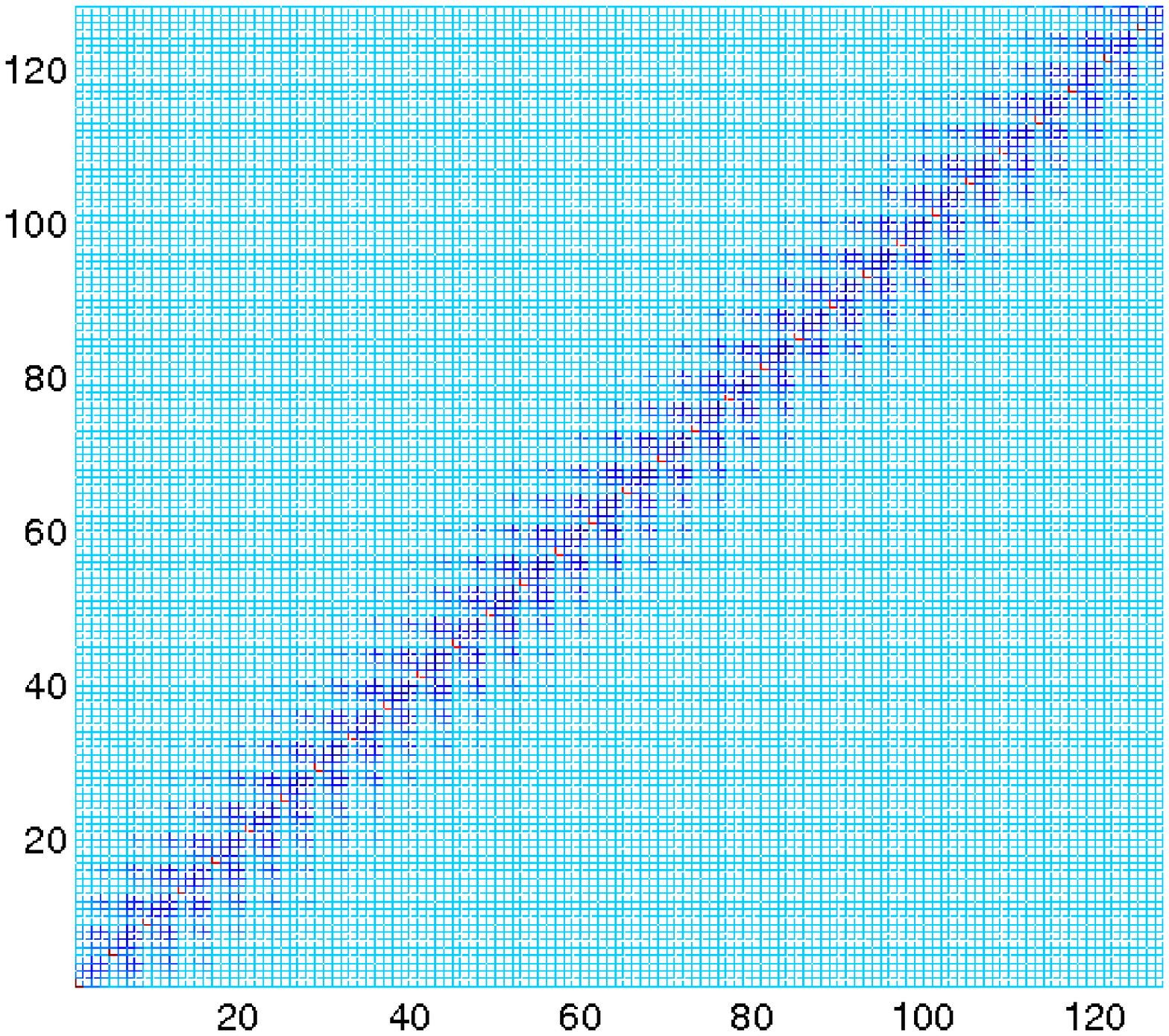}
\caption{\small  Block-sparsity in the core Hamiltonian matrix  in a box for $L=8$ (left); 
Rotated matrix profile (right).}
\label{fig:3DCoreHamPer}  
\end{figure}

In the next section we discuss computational details of the FFT-based eigenvalue solver on
the example of 3D linear chain of molecules. 

\subsection{Regularized spectral problem and complexity analysis}\label{ssec:Complexity_EigPr}

Combining the block circulant representations (\ref{eqn:BC-Core}), (\ref{eqn:BC-Laplace}) 
and (\ref{eqn:BC-mass}),
we are able to represent the eigenvalue problem for the Fock matrix 
in the Fourier space as follows
\begin{equation*}\label{eqn:HF-FSpace}
 \sum\limits^{L_1 -1}_{k_1=0}\sum\limits^{L_2 -1}_{k_2=0}\sum\limits^{L_3 -1}_{k_3=0}
      D_{L_1}^{k_1}\otimes D_{L_2}^{k_2}\otimes D_{L_3}^{k_3}\otimes 
      (B_{k_1 k_2 k_3} + A_{k_1 k_2 k_3}) U
= \lambda 
\sum\limits^{L_1 -1}_{k_1=0}\sum\limits^{L_2 -1}_{k_2=0}\sum\limits^{L_3 -1}_{k_3=0}
D_{L_1}^{k_1}\otimes D_{L_2}^{k_2}\otimes D_{L_3}^{k_3} S_{k_1 k_2 k_3} U, 
\end{equation*}
with the diagonal matrices $D_{L_\ell}^{k_\ell}\in \mathbb{R}^{L_\ell \times L_\ell}$, 
$\ell=1,2,3$, where $U=F_{\bf L}\otimes I_m C$.
The equivalent block-diagonal form reads 
\begin{equation}\label{eqn:HF-FSpace-Block}
 \mbox{bdiag}_{m_0\times m_0} 
 \{{\cal T}_{\bf L}'[F_{\bf L} ({\cal T}_{\bf L}\widehat{B}) 
+ F_{\bf L} ({\cal T}_{\bf L}\widehat{A})] -
\lambda {\cal T}_{\bf L}'(F_{\bf L} [{\cal T}_{\bf L} \widehat{S})]  \} U =0.
\end{equation}
The block structure specified by Lemma \ref{lem:SparseCaseP} allows to apply the efficient
eigenvalue solvers via FFT based diagonalization in the framework of Hartree-Fock calculations,
in general, with the numerical cost $O(m_0^2 L^d \log L)$.
\begin{figure}[htbp]
\centering
\includegraphics[width=6.0cm]{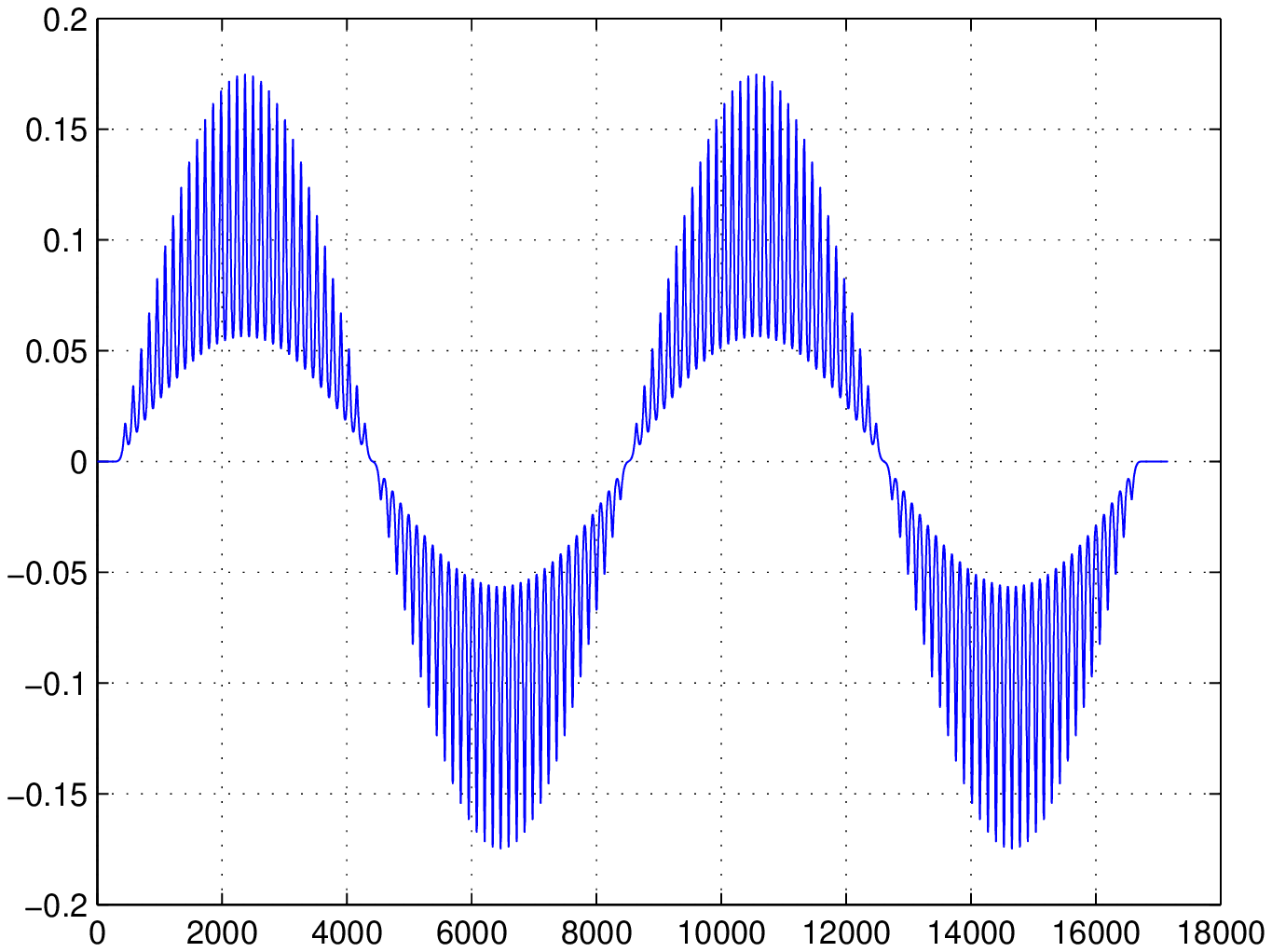} \quad
\includegraphics[width=6.0cm]{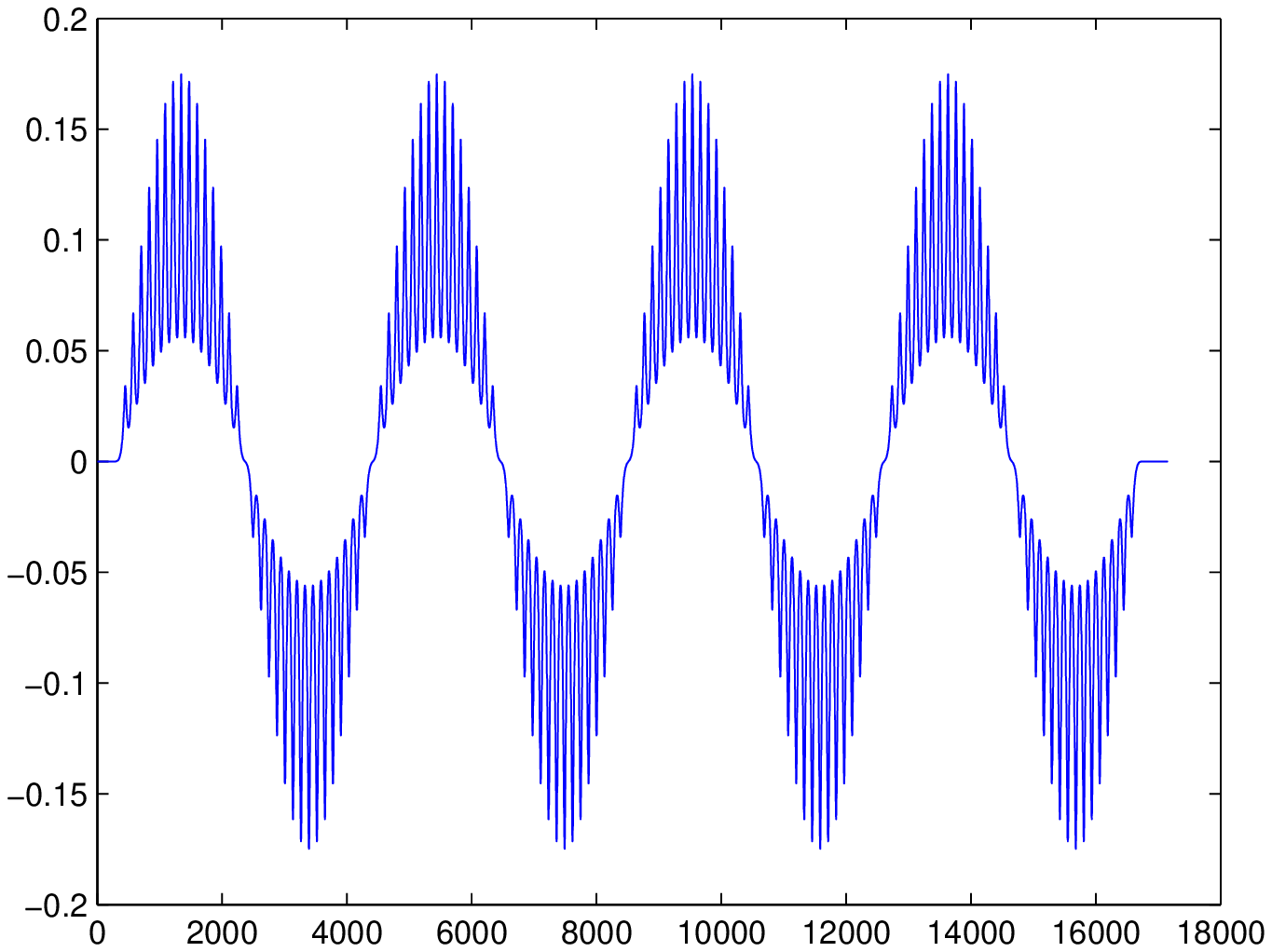}
\caption{\small  Molecular orbitals, i.e. the eigenvectors represented in GTO basis: 
the $4$th orbital (left), the $8$th orbital (right). 
}
\label{fig:3DCoreEigVect}  
\end{figure}
\begin{proposition}\label{prop:low_rank_coef}
The low-rank structure in the coefficients tensor mentioned above 
(see Section \ref{ssec:Tensor_bcirc}) allows to reduce the factor $L^d \log L$ to 
$L \log L$ for $d=2,3$. It was already observed in the proof of Lemma \ref{lem:SparseCaseP}
that the respective coefficients in the overlap and Laplacian Galerkin matrices
can be treated as the rank-$1$ and rank-$3$ tensors, respectively.
Clearly, the factorization rank for the nuclear part of the Hamiltonian does not exceed 
$R_{\cal N}$. Hence, Theorem \ref{thm:tens_FFT}  can be applied in generalized form.
\end{proposition}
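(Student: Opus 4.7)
The plan is to lift Theorem \ref{thm:tens_FFT}, which assumes a rank-$1$ separable coefficient tensor $[A_{\bf k}]$, to the more general canonical case, and then verify that each of the three constituents of the Fock matrix (overlap, Laplacian, nuclear) fits into this framework with a controlled mode rank. The proposition's complexity reduction will then follow from counting 1D FFTs.

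First I would argue the canonical-rank generalization by linearity. If the generating tensor admits an $R$-term decomposition
$$A_{\bf k} \;=\; \sum_{r=1}^{R} A^{(1)}_{k_1,r}\odot A^{(2)}_{k_2,r}\odot\cdots\odot A^{(d)}_{k_d,r},$$
then substituting into the representation of Theorem \ref{thm:tens_FFT} and using distributivity of the Hadamard product over summation yields
$$A \;=\; (F_{\bf L}^\ast\otimes I_{m_0})\Bigl[\,\sum_{r=1}^{R}{\cal P}_1(F_{L_1}A^{(1)}_r)\odot{\cal P}_2(F_{L_2}A^{(2)}_r)\odot\cdots\odot {\cal P}_d(F_{L_d}A^{(d)}_r)\Bigr](F_{\bf L}\otimes I_{m_0}).$$
Only $R\sum_{\ell=1}^d L_\ell$ univariate FFTs are required instead of one full $d$-dimensional FFT of length $|{\bf L}|$, so the arithmetic for assembling the Fourier-space blocks drops to $O(R\,m_0^2\,L\log L)$ for $L_\ell\sim L$. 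The Tucker case follows by reshuffling the core tensor into a sum of canonical terms of length equal to the product of the Tucker ranks.

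Second, I would verify the required rank bounds for the three matrix blocks appearing in the Fock matrix. For the overlap matrix, Lemma \ref{lem:SparseCaseP} already establishes $S_{k_1k_2k_3}=S^{(1)}_{k_1}S^{(2)}_{k_2}S^{(3)}_{k_3}$, i.e.\ rank-$1$ in the ${\bf k}$-space. For the discrete Laplacian, the three-term splitting $A_3=A^{(1)}\otimes S^{(2)}\otimes S^{(3)}+S^{(1)}\otimes A^{(2)}\otimes S^{(3)}+S^{(1)}\otimes S^{(2)}\otimes A^{(3)}$ transfers (after contraction with the rank-$1$ basis tensors ${\bf G}_{\bf k}$) into a rank-$3$ canonical decomposition of $[B_{k_1k_2k_3}]$ in ${\bf k}$. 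For the nuclear part, the lattice Coulomb tensor ${\bf P}_{c_L}$ of (\ref{eqn:EwaldTensorGl}) carries canonical rank at most $M_0 R_{\cal N}$, and since the matrix entries (\ref{eqn:nuc_MatrSparsP}) are obtained by taking a Hadamard product with ${\bf G}_{\bf k}\odot {\bf G}_{\bf m}$ (itself rank-$1$, by (\ref{eqn:Basis_shift})) and then contracting, the $\bf k$-dependence of $[A_{k_1k_2k_3}]$ inherits the same canonical rank $R_{\cal N}$ after rank reduction.

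Third, I would combine these ingredients in (\ref{eqn:HF-FSpace-Block}): each of $B$, $A$ and $S$ is replaced by its factorized block-diagonal form, and the generalized eigenproblem in the Fourier space decouples into $|{\bf L}|$ small $m_0\times m_0$ generalized eigenproblems whose coefficients are assembled from products of $d$ univariate inverse FFTs of length $L_\ell$. This yields the announced $O(m_0^2\,L\log L)$ complexity in place of $O(m_0^2\,L^d\log L)$. The main bookkeeping obstacle, and the only non-routine point, is to be careful that ``rank of the coefficients tensor'' is taken in the $\bf k$-mode while the matrix slot $m_0\times m_0$ is treated pointwise under $\odot$; once this is fixed, the argument is a direct rank-wise summation built on Theorem \ref{thm:tens_FFT}.
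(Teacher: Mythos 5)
Your proof is correct and takes essentially the route the paper intends: the paper states the proposition without a separate proof environment, simply pointing to the rank-$1$ factorization in Theorem \ref{thm:tens_FFT}, the rank-$1$/rank-$3$ observations made in the proof of Lemma \ref{lem:SparseCaseP} for the overlap and Laplacian, and the rank bound $R_{\cal N}$ for the nuclear block coming from the canonical representation (\ref{eqn:EwaldTensorGl}). What you add is the explicit rank-$R$ generalization of Theorem \ref{thm:tens_FFT} by linearity of the Kronecker and Hadamard products and the resulting FFT count $O(R\,m_0^2\,L\log L)$; this is exactly what the paper means by ``applied in generalized form'' but leaves unstated. Your last caution --- that the canonical rank is measured in the ${\bf k}$-mode only, with the $m_0\times m_0$ block slot acting pointwise under $\odot$ --- is well placed, since conflating the two would falsify the claimed complexity.
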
 
Figure \ref{fig:3DCoreEigVect} visualizes molecular orbitals on fine spatial grid 
with $n=2^{14}$: the $4$th orbital (left), the $8$th orbital (right).
The eigenvectors are computed in GTO basis for $(L,1,1)$ system with $L=128$ and $m_0=4$.  

Table \ref{Table_Times_SupSvsPer} compares CPU times in sec. (Matlab) for the 
full eigenvalue solver on a 3D $(L, 1, 1)$ lattice in a box, and for
the FTT-based diagonalization in the periodic supercell, 
all computed for $m_0=4$, $L=2^p$ ($p=7,8,...,15$).
The number of basis function (problem size) is given by $N_b=m_0 L$.
\begin{figure}[htb]
\centering
\includegraphics[width=6.0cm]{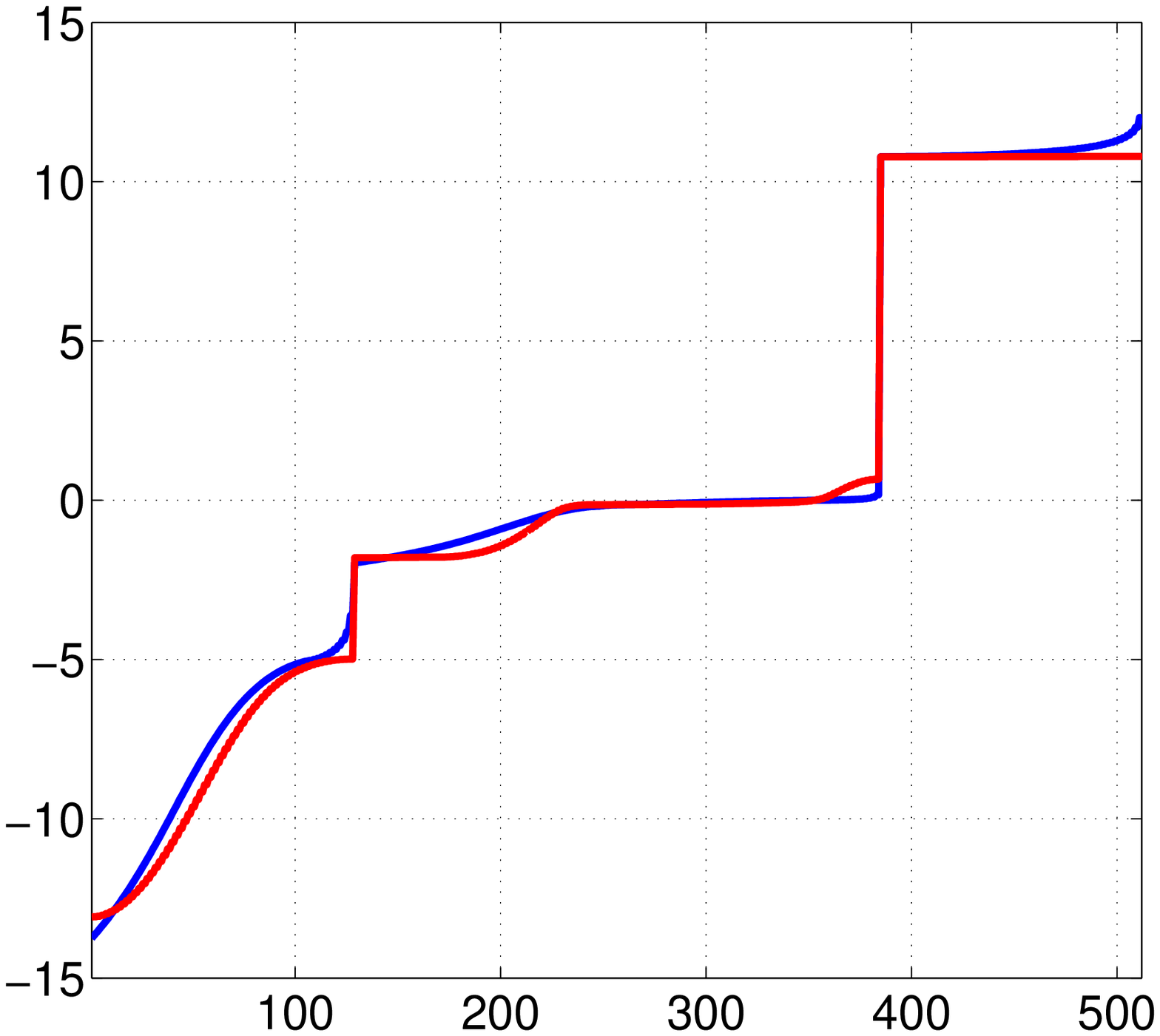}\quad  
\includegraphics[width=6.0cm]{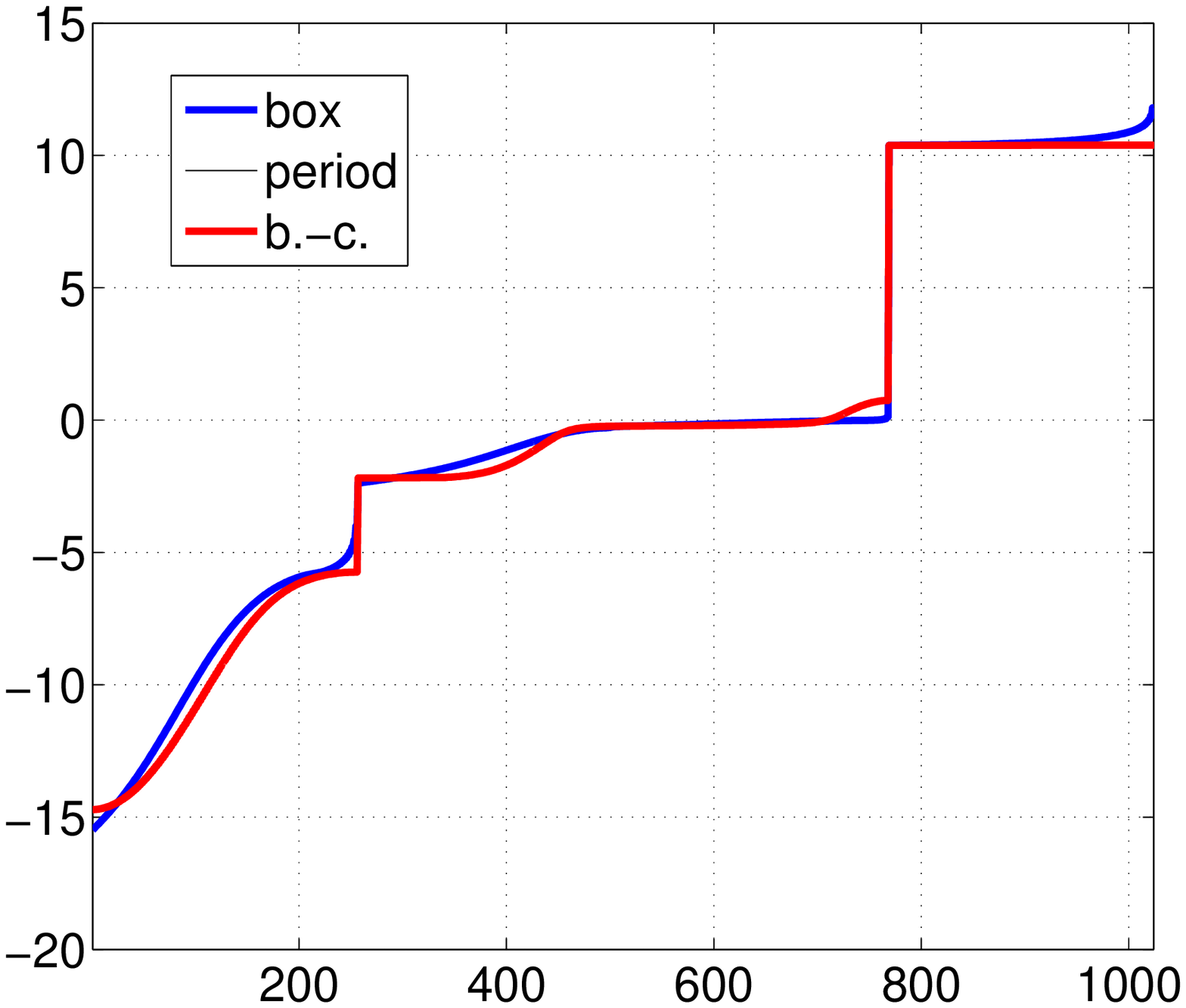}
\caption{\small  Spectrum of the core Hamiltonian.}
\label{fig:3DCoreEig}  
\end{figure}
\begin{table}[htb]
\begin{center}%
\begin{tabular}
[c]{|r|r|r|r|r|r|r|r|r|r|}%
\hline
Problem size $N_b=n_0 L $      & $512$  & $1024$ & $2048$ & $4096$ & $8192$ & $16384$ & $32768$ & $65536$ & $131072$ \\ 
 \hline 
Full EIG-solver         &  $0.67$    &    $5.49$  &   $48.6 $   &  $497.4$    & $--$ & $--$ & $--$ & $--$ & $--$\\
 \hline 
FFT diagonalization  &  $0.10$ &   $0.09$  &   $0.08 $   &  $0.14$    & $0.44$ & $1.5$ & $5.6$ & $22.9$ & $89.4$ \\
 \hline 
 \end{tabular}
\caption{CPU times (sec.): full eig-solver  vs. FFT-based diagonalization for $(L, 1, 1)$ lattice,
and with $m_0=4$, $L=2^p$, $p=7,8,...,15$. }
\label{Table_Times_SupSvsPer}
\end{center}
\end{table}

Figure \ref{fig:3DCoreEig} represents the spectrum of the core Hamiltonian 
in a box vs. those in a periodic supercell for different number of cells $L=128,256$, 
where $m_0=4$.  
The systematic difference between the eigenvalues in both cases can be observed
even for very large $L$. This spectral pollution effects have been discussed 
and theoretically analyzed in \cite{CancesDeLe:08}.
 
\begin{figure}[tbp]
\centering
\includegraphics[width=6.0cm]{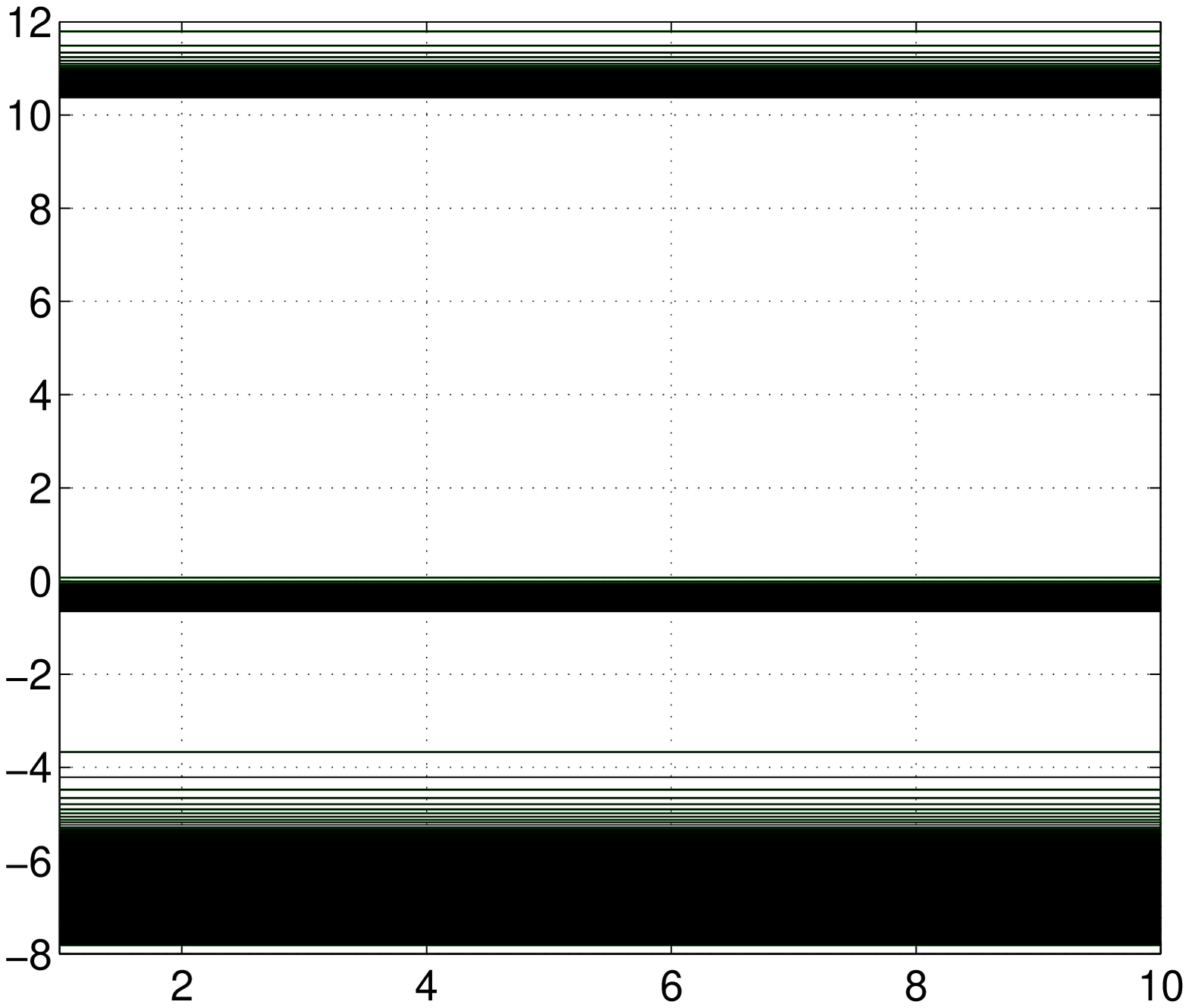}\quad
 \includegraphics[width=6.0cm]{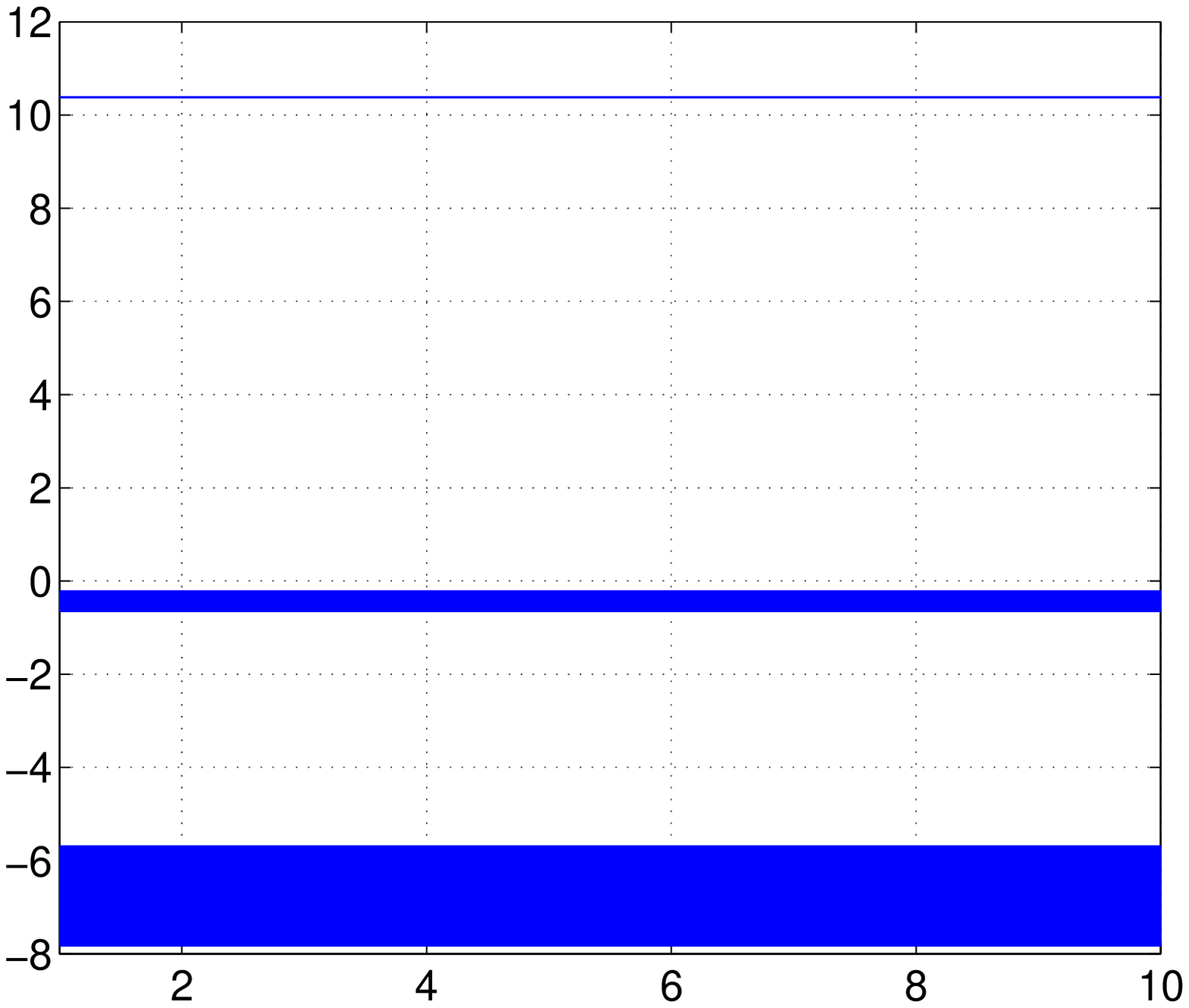}
\caption{\small  Spectrum of the core Hamiltonian for a $(L,1,1)$ lattice with $L=256$, and $m_0=4$,
in a box (left) and for periodic case (right).}
\label{fig:3DSpectBand}  
\end{figure}
Figure \ref{fig:3DSpectBand} presents the spectral bands for 
a $(L,1,1)$ lattice system in a box and in the periodic setting, for $L=256$, and $m_0=4$.
 
\begin{figure}[tbp]
\centering
\includegraphics[width=7.0cm]{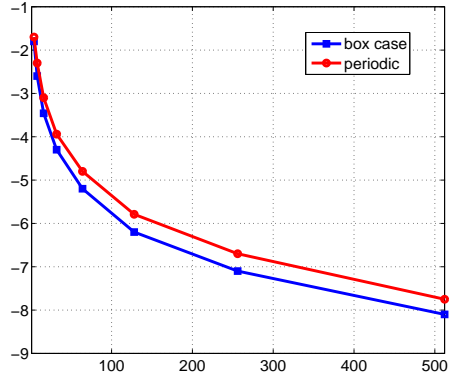} 
\caption{\small  Average energy per unit cell vs. $L$ for a $(L,1,1)$ lattice in a
3D rectangular ``tube``.}
\label{fig:3DAveEig}  
\end{figure}
Figure \ref{fig:3DAveEig} demonstrates the relaxation of the average energy per unit cell
with $m_0=4$, for a $(L,1 ,1)$ lattice structure in a 3D rectangular ``tube`   up
to $L=512$, for both periodic and open boundary conditions.

\section{Conclusions}\label{sec:Conclusions}

We have introduced and analyzed the grid-based tensor product approach to discretization 
and solution
of the Hartree-Fock equation in ab initio modeling of the lattice-structured molecular systems.
In this presentation we consider the case of core Hamiltonian.
All methods and algorithms developed in this paper are implemented and tested in Matlab.

The proposed tensor techniques manifest the twofold benefits: (a)
the entries of the Fock matrix are computed by  1D operations using
low-rank tensors represented on a 3D grid, (b) the low-rank tensor structure in 
the diagonal blocks of the Fock matrix in the Fourier space reduces the conventional 
3D FFT to the product of 1D Fourier transforms. 
The main contributions include:

\begin{itemize}
 \item[$\bullet$] Fast computation of Fock matrix by 1D matrix-vector operations using low-rank tensors represented on 
a 3D spacial grid.
\item[$\bullet$] Analysis and numerical implementation of the multilevel block circulant representation 
of the Fock matrix in the periodic setting.
\item[$\bullet$] Investigation of the low-rank tensor structure in 
the diagonal blocks of the Fock matrix represented in the Fourier space, that
allows to reduce the conventional 3D FFT to the product of 1D FFTs.
\item[$\bullet$] Numerical tests illustrating the computational efficiency of the tensor-structured 
methods applied to the reduced Hartree-Fock equation for lattice-type and periodic systems.
Numerical experiments on verification of the theoretical results on the 
asymptotic complexity estimated of the presented algorithms.
\end{itemize}
Here we confine ourself to the case of core Hamiltonian part in the full Fock matrix (linear part
in the Fock operator). 
The rigorous study of the fully nonlinear self-consistent 
Hartree-Fock eigenvalue problem for periodic and lattice-structured systems in a box
is a matter of future research.   

\section{Appendix: Overview on block circulant matrices}
\label{sec_Append:block-circ}

We recall that a one-level block circulant matrix  $A\in {\cal BC} (L,m_0)$ 
is defined by \cite{Davis},
\begin{equation}\label{eqn:block_c}
A=\operatorname{bcirc}\{A_0,A_1,...,A_{L-1}\}=
 \begin{bmatrix}
A_0     &  A_{L-1} &  \cdots  & A_{2} &  A_{1} \\
A_{1} &  A_0 &  \cdots  & \vdots & A_{2} \\
\vdots  &  \vdots & \ddots &  A_0 & \vdots  \\
A_{L-1}   &  A_{L-2}  & \cdots & A_{1} & A_0 \\
\end{bmatrix}
\in \mathbb{R}^{L m_0\times L m_0},
\end{equation}
where $A_k \in \mathbb{R}^{m_0\times m_0}$ for $k=0,1, \ldots ,L-1$, are matrices of general structure. 
The equivalent Kronecker product representation is defined by the associated matrix polynomial,
\begin{equation}\label{eqn:bcircPol}
 A= \sum\limits^{L -1}_{k=0} \pi^k \otimes A_k =:p_A(\pi),
\end{equation}
where $\pi=\pi_L\in \mathbb{R}^{L \times L}$ is the periodic downward shift 
(cycling permutation) matrix,
\begin{equation}
\pi_L:=
 \begin{bmatrix}
0  &  0    &  \cdots &  0  &  1 \\ 
1  &  0   &  \cdots &  0  &  0 \\
\vdots  &\vdots & \ddots & \vdots & \vdots \\
0  &    \cdots  &  1 &  0  &  0 \\
0  &   0  &  \cdots &  1  &  0 \\
\end{bmatrix}
,
\end{equation}
and $\otimes$ denotes the Kronecker product of matrices. 

In the case $m_0=1$ a matrix $A\in {\cal BC} (L,1)$ defines a circulant matrix 
generated by its first column vector
$\widehat{a}=(a_0,...,a_{L-1})^T$. The associated scalar polynomial then reads
\[
 p_A(z):= a_0 + a_1 z + ... +a_{L-1} z^{L-1},
\]
so that (\ref{eqn:bcircPol}) simplifies to
\[
 A=p_A(\pi_L). 
\]
Let $\omega= \omega_L= \exp(-\frac{2\pi i}{L})$,  we denote by 
$$
F_L=\{f_{k\ell}\}\in \mathbb{R}^{L\times L}, \quad \mbox{with} \quad  
f_{k\ell}=\frac{1}{\sqrt{L}}\omega_L^{(k-1)(\ell-1)},\quad
k,l=1,...,L,
$$
the unitary matrix of Fourier transform. Since the shift matrix $\pi_L$ is diagonalizable 
in the Fourier basis, 
\begin{equation} \label{eqn:diagshift}
 \pi_L=F_L^\ast D_L F_L,\quad D_L= \mbox{diag}\{1,\omega,...,\omega^{L-1} \},
\end{equation}
the same holds for any circulant matrix,
\begin{equation} \label{eqn:circDiag}
 A = p_A(\pi_L) = F_L^\ast p_A(D_L) F_L,  
\end{equation}
where
\[
 p_A(D_L)=\mbox{diag}\{p_A(1),p_A(\omega),...,p_A(\omega^{L-1})\}= \mbox{diag}\{F_L a\}.
\]

Conventionally, we denote by $\mbox{diag}\{x\}$ a diagonal matrix generated by a vector $x$. 
Let $X$ be an $L  m_0\times m_0$ matrix obtained by concatenation of $m_0\times m_0$ 
matrices $X_k$, $k=0,...,L-1$, 
$X=\operatorname{conc}(X_0,...,X_{L-1})=[X_0,...,X_{L-1}]^T$. 
For example, the first block column in (\ref{eqn:block_c})
has the form $\operatorname{conc}(A_0,...,A_{L-1})$.
We denote by $\mbox{bdiag}\{X\}$ the $L m_0\times L m_0$ block-diagonal matrix of 
block size $L$ generated by $m_0\times m_0$ blocks $X_k$. 

It is known that similarly to the case of circulant matrices (\ref{eqn:circDiag}), 
block circulant matrix in ${\cal BC} (L,m_0)$ is unitary equivalent to the block diagonal one 
by means of Fourier transform via  representation (\ref{eqn:bcircPol}), see \cite{Davis}. 
In the following, we describe the block-diagonal representation of a matrix 
$A\in {\cal BC} (L,m_0)$ in the form that is convenient for generalization to the multi-level
block circulants as well as for the description of FFT based implementational schemes. 
To that end, let us introduce the reshaping (folding) transform ${\cal T}_L$ that maps a 
$L m_0\times m_0$ matrix $X$ (i.e., the first block column in $A$) 
to $L\times m_0\times m_0$ tensor $B={\cal T}_L X$ by plugging 
the $i$th $m_0\times m_0$ block in $X$ into a slice $B(i,:,:)$. The respective unfolding returns 
the initial matrix $X={\cal T}_L' B$.
We denote by $\widehat{A}\in \mathbb{R}^{L m_0\times m_0}$ the first 
block column of a matrix $A\in {\cal BC} (L,m_0)$, with a shorthand notation 
$$
\widehat{A}=[A_0,A_1,...,A_{L-1}]^T,
$$ 
so that the $L\times m_0\times m_0$ tensor ${\cal T}_L \widehat{A}$ represents slice-wise
all generating $m_0\times m_0$ matrix blocks.  

\begin{proposition}\label{prop:eig_bcmatr}
For $A\in {\cal BC} (L,m_0)$ we have
\begin{equation} \label{eqn:bcircDiag}
 A= (F_L^\ast \otimes I_{m_0}) \operatorname{bdiag} 
\{ \bar{A}_0, \bar{A}_1,\ldots , \bar{A}_{L-1}\}
(F_L \otimes I_{m_0}),
\end{equation}
where 
\[
 \bar{A}_j = \sum\limits^{L -1}_{k=0} \omega_L^{jk} A_k \in \mathbb{C}^{m_0 \times m_0},
\]
can be recognized as the $j$-th $m_0\times m_0$ matrix block in block column 
${\cal T}_L'(F_L ({\cal T}_L \widehat{A}))$, such that 
\[
  \left[ \bar{A}_0, \bar{A}_1,\ldots , \bar{A}_{L-1}\right]^T = 
 {\cal T}_L'(F_L ({\cal T}_L \widehat{A})).
\]
A set of eigenvalues $\lambda$ of $A$ is then given by
\begin{equation}\label{eqn:lambdAbc}
\{\lambda | Ax = \lambda x, \; x\in \mathbb{C}^{L m_0}\}=
\bigcup\limits_{j=0}^{L-1} \{ \lambda |\bar{A}_j u = \lambda u, \; u \in \mathbb{C}^{m_0}  \}.
 \end{equation}
The eigenvectors corresponding to the spectral sets 
$$
\Sigma_j= \{\lambda_{j, m} |\bar{A}_j u_{j,m} = \lambda_{j,m} u_{j,m}, 
\; u_{j,m} \in \mathbb{C}^{m_0}\}, 
\quad j= 0,1,\ldots , L-1,\quad m=1,...,m_0,
$$ 
can be represented in the form
\begin{equation}\label{eqn:eigvecA}
 U_{j,m}=(F_L^\ast \otimes I_{m}) \bar{U}_{j,m},\quad \mbox{where} \quad 
\bar{U}_{j,m}= E_{[j]} \operatorname{vec}\, [u_{0,m},u_{1,m},...,u_{L-1,m}],
\end{equation}
with $E_{[j]}=\operatorname{diag}\{e_j\}\otimes I_{m_0} \in \mathbb{R}^{L m_0\times L m_0} $, and
$e_j\in \mathbb{R}^{L}$ being the $j$th Euclidean basis vector.
\end{proposition}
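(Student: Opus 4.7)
The plan is to derive the block-diagonalization directly from the Kronecker-product representation (\ref{eqn:bcircPol}) and the scalar diagonalization (\ref{eqn:diagshift}), then read off the spectral data from the resulting block-diagonal form. First I would substitute $\pi_L = F_L^\ast D_L F_L$ into $A = \sum_{k=0}^{L-1}\pi_L^k \otimes A_k$ and use $\pi_L^k = F_L^\ast D_L^k F_L$ together with the mixed-product property of the Kronecker product to obtain
\[
 A = \sum_{k=0}^{L-1}(F_L^\ast D_L^k F_L)\otimes(I_{m_0} A_k I_{m_0})
   = (F_L^\ast \otimes I_{m_0})\Bigl(\sum_{k=0}^{L-1} D_L^k \otimes A_k\Bigr)(F_L\otimes I_{m_0}).
\]
Since each $D_L^k$ is diagonal with entries $\omega_L^{jk}$ on position $j$, the bracketed sum is block-diagonal with $j$th diagonal block $\bar{A}_j = \sum_{k=0}^{L-1}\omega_L^{jk} A_k$, which proves (\ref{eqn:bcircDiag}).

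Next, I would identify $\bar{A}_j$ with the output of the reshape-FFT-reshape pipeline in the statement. Applying $F_L$ column-wise to the $L\times m_0 \times m_0$ tensor ${\cal T}_L \widehat{A}$ produces, in slice $j$, exactly $\sum_{k=0}^{L-1}(F_L)_{jk} A_k$ up to normalization conventions, and the unfolding ${\cal T}_L'$ stacks these slices as the required block column. A short check of the normalization constants in the definition of $F_L$ (the $1/\sqrt{L}$ factors cancel in the unitary conjugation pattern $F_L^\ast \cdots F_L$) confirms this step.

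For the spectral claim (\ref{eqn:lambdAbc}), I would use that $F_L \otimes I_{m_0}$ is unitary, hence preserves spectra, so $\sigma(A) = \sigma(\operatorname{bdiag}\{\bar{A}_0,\ldots,\bar{A}_{L-1}\}) = \bigcup_j \sigma(\bar{A}_j)$. For the eigenvector formula (\ref{eqn:eigvecA}), if $\bar{A}_j u_{j,m} = \lambda_{j,m} u_{j,m}$, then the vector $\bar{U}_{j,m}$ that places $u_{j,m}$ in the $j$th block position and zeros elsewhere is an eigenvector of the block-diagonal matrix with eigenvalue $\lambda_{j,m}$; this is exactly what $E_{[j]}\operatorname{vec}[u_{0,m},\ldots,u_{L-1,m}]$ encodes, the mask $\operatorname{diag}\{e_j\}\otimes I_{m_0}$ selecting the $j$th block. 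Pulling back by $F_L^\ast \otimes I_{m_0}$ yields the eigenvector $U_{j,m}$ of $A$.

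The routine obstacle is bookkeeping: keeping the Fourier normalization, the indexing ($0$-based vs $1$-based in $F_L$), and the mixed-product identity $(X\otimes Y)(U\otimes V)=XU\otimes YV$ consistently applied so that $\bar{A}_j$ comes out with the stated sign of $\omega_L^{jk}$ rather than its conjugate. The only genuinely delicate point is (\ref{eqn:eigvecA}): one must verify that picking the $j$th block column of the identity (via $E_{[j]}$) after listing an $m$th eigenvector from each diagonal block indeed isolates the correct block, and that the backward Fourier transform $F_L^\ast\otimes I_{m_0}$ then distributes this localized vector into a full length-$Lm_0$ eigenvector of $A$. Everything else is a direct computation from (\ref{eqn:bcircPol}) and (\ref{eqn:diagshift}).
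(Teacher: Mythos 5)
Your proposal is correct and follows essentially the same route as the paper's own proof: substitute $\pi_L=F_L^\ast D_L F_L$ into the polynomial representation (\ref{eqn:bcircPol}), use the Kronecker mixed-product rule to pull out $F_L^\ast\otimes I_{m_0}$ and $F_L\otimes I_{m_0}$, read off the block-diagonal middle factor, and then obtain the eigendata by unitary equivalence and block-diagonal structure. (Your parenthetical about the $1/\sqrt{L}$ factors "cancelling" is a little imprecise — the unitary normalization of $F_L$ makes the conjugation valid but does not literally make $\bar{A}_j$ equal the $j$th slice of $F_L({\cal T}_L\widehat{A})$ without a $\sqrt{L}$ — but the paper itself glosses over the same constant, so this matches the source.)
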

\begin{proof}
We combine representations (\ref{eqn:bcircPol}) and (\ref{eqn:diagshift}) to obtain
\begin{align}\label{eqn:Bcircdiag}
A & = \sum\limits^{L -1}_{k=0} \pi^k \otimes A_k = 
      \sum\limits^{L -1}_{k=0} (F_L^\ast D^k F_L) \otimes A_k \\ \nonumber
  & = (F_L^\ast \otimes I_{m_0}) (\sum\limits^{L -1}_{k=0}  D^k  
       \otimes A_k)(F_L \otimes I_{m_0}) \\ \nonumber
  & = (F_n^\ast \otimes I_m)(\sum\limits^{L -1}_{k=0} 
      \mbox{bdiag}\{A_k,\omega_L^k A_k,...,\omega_L^{k(L-1)}A_{k} \}  )  
       (F_L \otimes I_{m_0})\\ \nonumber
  & = (F_L^\ast \otimes I_{m_0})  
      \mbox{bdiag}\{\sum\limits^{L -1}_{k=0} A_k,\sum\limits^{L -1}_{k=0} \omega_L^k A_k,...,
      \sum\limits^{L -1}_{k=0} \omega_L^{k(L-1)}A_{k} \}    (F_L \otimes I_{m_0})\\ \nonumber
  & = (F_L^\ast \otimes I_{m_0}) \mbox{bdiag}_{m_0 \times m_0} 
  \{{\cal T}_L'(F_L ({\cal T}_L \widehat{A}))\} (F_L \otimes I_{m_0}),  \nonumber
\end{align}
where the final step follows by the definition of FT matrix and by the construction of 
${\cal T}_L$. 
The structure of eigenvalues and eigenfunctions  then follows by simple 
calculations with block-diagonal matrices.
\end{proof}

The next statement describes  the block-diagonal form for a class of symmetric BC 
matrices, ${\cal BC}_s (L,m_0)$, 
that is a simple corollary of \cite{Davis}, Proposition \ref{prop:eig_bcmatr}. 
In this case we have $A_0=A_0^T$, and $A_k^T=A_{L-k}$, $k=1,...,L-1$.
\begin{corollary}\label{cor:eig_symbcmatr}
 Let $A\in {\cal BC}_s (L,m_0)$  be symmetric, then $A$ is unitary similar
to a Hermitian block-diagonal matrix, i.e., $A$ is of the form 
\begin{equation}\label{eqn:F_bc}
 A= (F_L \otimes I_{m_0}) \operatorname{bdiag} (\tilde{A}_0, \tilde{A}_1,\ldots , \tilde{A}_{L-1})
(F_L^\ast \otimes I_{m_0}),
\end{equation}
where $I_{m_0}$ is the $m_0\times m_0$ identity matrix.
The matrices $\tilde{A}_j \in \mathbb{C}^{m_0\times m_0}$, $j= 0,1,\ldots , L-1$, are 
defined for even $n\geq 2$ as
\begin{equation}\label{eqn:symBc}
 \tilde{A}_j =A_0 + \sum\limits^{L/2-1}_{k=1} (\omega^{kj}_L A_k + 
\overline{\omega}^{kj}_L A^T_k) + (-1)^j A_{L/2}.
\end{equation}
\end{corollary}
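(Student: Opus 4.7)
The plan is to derive the statement directly from Proposition~\ref{prop:eig_bcmatr} by specializing the generic diagonalization formula to the symmetric case, where the hypotheses $A_0 = A_0^T$ and $A_k^T = A_{L-k}$ for $1 \le k \le L-1$ hold. Proposition~\ref{prop:eig_bcmatr} already yields
\[
A = (F_L^\ast \otimes I_{m_0})\operatorname{bdiag}\{\bar A_0, \ldots, \bar A_{L-1}\}(F_L \otimes I_{m_0}), \qquad \bar A_j = \sum_{k=0}^{L-1}\omega_L^{jk} A_k,
\]
so the remaining task is to rewrite $\bar A_j$ in the symmetric form (\ref{eqn:symBc}) and to verify that each $\bar A_j$ is Hermitian.

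First I would split the sum defining $\bar A_j$ into four pieces corresponding to $k = 0$, $1 \le k \le L/2-1$, $k = L/2$, and $L/2+1 \le k \le L-1$. In the last piece I perform the change of summation variable $k' = L-k$, invoke the symmetry $A_{L-k'} = A_{k'}^T$, and use $\omega_L^{j(L-k')} = \omega_L^{-jk'} = \overline{\omega}_L^{jk'}$ together with $\omega_L^{jL/2} = e^{-i\pi j} = (-1)^j$. Combining the first and transformed fourth pieces then yields
\[
\bar A_j = A_0 + \sum_{k=1}^{L/2-1}\bigl(\omega_L^{jk} A_k + \overline{\omega}_L^{jk} A_k^T\bigr) + (-1)^j A_{L/2},
\]
which is exactly the expression (\ref{eqn:symBc}) for $\tilde A_j$.

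Next I would verify Hermiticity of $\tilde A_j$ directly from this formula, using that the $A_k$ are real (so $A_k^\ast = A_k^T$), that $A_0 = A_0^T$, and that $A_{L/2}^T = A_{L-L/2} = A_{L/2}$; the two inner summands swap under conjugate transposition, leaving $\tilde A_j^\ast = \tilde A_j$.

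Finally, I would reconcile the form (\ref{eqn:F_bc}), which has $F_L$ on the left and $F_L^\ast$ on the right, with the form of Proposition~\ref{prop:eig_bcmatr}. Since $F_L$ is symmetric as a matrix ($f_{k\ell}$ is symmetric in $k,\ell$) and $A$ is real, taking the complex conjugate of the representation of Proposition~\ref{prop:eig_bcmatr} gives the alternative form with $F_L$ and $F_L^\ast$ swapped, at the cost of relabelling $j \mapsto L-j$ in the diagonal blocks (using $\overline{\bar A_j} = \bar A_{L-j}$). Since $\{\tilde A_j\}_{j=0}^{L-1}$ is invariant as a set under this relabelling (and since each $\tilde A_j$ is Hermitian by the previous step), the representation (\ref{eqn:F_bc}) follows. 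The only mildly technical step is the index bookkeeping in the reflection $k \leftrightarrow L-k$ and in matching the two equivalent placements of $F_L$, $F_L^\ast$; everything else is routine manipulation once Proposition~\ref{prop:eig_bcmatr} is in hand.
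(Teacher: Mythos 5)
The paper offers no proof of this corollary, simply pointing to Davis and Proposition~\ref{prop:eig_bcmatr}, so there is no paper argument to compare against step by step. Your core computation is correct and is the substantive content: splitting $\bar A_j = \sum_{k=0}^{L-1}\omega_L^{jk}A_k$ into the pieces $k=0$, $1\le k\le L/2-1$, $k=L/2$, $L/2+1\le k\le L-1$, reflecting via $k\mapsto L-k$, and using $A_{L-k}=A_k^T$, $\omega_L^{jL/2}=(-1)^j$ does give $\bar A_j = A_0 + \sum_{k=1}^{L/2-1}(\omega_L^{jk}A_k+\overline{\omega}_L^{jk}A_k^T)+(-1)^jA_{L/2}$, and the Hermiticity check (using $A_0=A_0^T$, $A_{L/2}=A_{L/2}^T$, and realness of the $A_k$) is sound.

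The gap is in the final ``reconciliation'' of the placement of $F_L$ and $F_L^\ast$. Conjugating the real identity from Proposition~\ref{prop:eig_bcmatr} gives, as you say, $A=(F_L\otimes I_{m_0})\operatorname{bdiag}\{\overline{\tilde A_0},\ldots,\overline{\tilde A_{L-1}}\}(F_L^\ast\otimes I_{m_0})$, and you correctly compute $\overline{\tilde A_j}=\tilde A_{L-j}=\tilde A_j^T$. But $\operatorname{bdiag}$ is not invariant under permuting the blocks: what you obtain is $\operatorname{bdiag}\{\tilde A_0,\tilde A_{L-1},\ldots,\tilde A_1\}$, \emph{not} $\operatorname{bdiag}\{\tilde A_0,\tilde A_1,\ldots,\tilde A_{L-1}\}$. ``Invariant as a set'' is true but does not close the argument, because two block-diagonal matrices with permuted blocks are different matrices. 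In fact what you have actually shown is that (\ref{eqn:F_bc}) and (\ref{eqn:symBc}) as printed are inconsistent with each other by a conjugation (equivalently, by a block transposition or the relabelling $j\mapsto L-j$); the consistent statement is either to keep $F_L^\ast$ on the left as in Proposition~\ref{prop:eig_bcmatr} together with $\tilde A_j$ as in (\ref{eqn:symBc}), or to keep (\ref{eqn:F_bc}) and replace the $j$-th block by $\overline{\tilde A_j}$. Rather than glossing over this, your write-up should flag it explicitly; the spectral conclusions (each block Hermitian, eigenvalues given by the union of block spectra) are of course unaffected by the relabelling.
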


Corollary \ref{cor:eig_symbcmatr} combined with Proposition \ref{prop:eig_bcmatr} describes
a simplified  structure of eigendata in the symmetric case.
Notice that the above representation imposes the symmetry of each 
real-valued diagonal blocks $\tilde{A}_j \in \mathbb{R}^{m_0\times m_0}, \; j= 0,1,\ldots , L-1$,
in (\ref{eqn:F_bc}).

Finally, we recall that a one-level symmetric block Toeplitz matrix  $A\in {\cal BT}_s (L,m_0)$ 
is defined by \cite{Davis},
\begin{equation}\label{eqn:block_SToepl}
A=\operatorname{BToepl}_s\{A_0,A_1,...,A_{L-1}\}=
 \begin{bmatrix}
A_0     &  A_{1}^T &  \cdots  & A_{L-2}^T &  A_{L-1}^T \\
A_{1} &  A_0 &  \cdots  & \vdots & A_{L-2}^T \\
\vdots  &  \vdots & \ddots &  A_0 & \vdots  \\
A_{L-1}   &  A_{L-2}  & \cdots & A_{1} & A_0 \\
\end{bmatrix}
\in \mathbb{R}^{L m_0\times L m_0},
\end{equation}
where $A_k \in \mathbb{R}^{m_0\times m_0}$ for $k=0,1, \ldots ,L-1$, are matrices of a general structure.

\end{document}